\newcommand{\dd}{\,{\rm d}}
\newcommand\R{{\mathbb{R}}}
\newcommand\N{{\mathbb{N}}}
\newcommand\C{{\mathbb{C}}}
\newcommand\Z{{\mathbb{Z}}}
\renewcommand\S{{\mathbb{S}}}
\newcommand\rodnl{\Bigl(\frac{3-\gamma}{2}u^2+\frac{\gamma}{2}u_x^2\Bigr)}
\DeclareMathOperator\supp{\text{supp}}
\newtheorem{theorem}{Theorem}[section]
\newtheorem{proposition}[theorem]{Proposition}
\newtheorem{lemma}[theorem]{Lemma}
\newtheorem{corollary}[theorem]{Corollary}
\theoremstyle{definition}
\theoremstyle{remark}
\newtheorem{remark}[theorem]{Remark}
\numberwithin{equation}{section}
\begin{document}

\title[Rod equation]
{On permanent and breaking waves in hyperelastic rods and rings}

\author{Lorenzo Brandolese and Manuel Fernando Cortez}

\address{Universit\'e de Lyon, Universit\'e Lyon 1,
CNRS UMR 5208 Institut Camille Jordan,
43 bd. du 11 novembre,
Villeurbanne Cedex F-69622, France.}
\email{brandolese{@}math.univ-lyon1.fr, cortez@math.univ-lyon1.fr}
\urladdr{http://math.univ-lyon1.fr/$\sim$brandolese}
\thanks{The authors are supported by the French ANR Project DYFICOLTI. The second author is also supported by the 
{\it Secretar\'\i a Nacional de Educaci\'on Superior, Ciencia, Tecnolog\'\i a e Innovaci\'on\/}}

\keywords{Rod equation, Compressible rod, Camassa--Holm, Shallow water, Wave-breaking, Blowup, Minimization, 
weighted Poincar\'e inequality}

\begin{abstract}
We prove that  the only global strong solution of the periodic rod equation vanishing in at least one point $(t_0,x_0)\in \R^+\times\S^1$
is the identically zero solution. Such conclusion holds  provided the physical parameter~$\gamma$ of the model (related to the Finger deformation tensor) is outside some neighborhood of the origin and applies in particular
for the Camassa--Holm equation, corresponding to $\gamma=1$.
We also establish the analogue of this unique continuation result in the case of non-periodic solutions defined on the whole 
real line with vanishing boundary conditions at infinity.
Our analysis relies on the application of new local-in-space blowup criteria and involves the computation of several best constants
in convolution estimates and weighted Poincar\'e inequalities.
\end{abstract}

\maketitle

\begin{center}
{The original publication is available 
online in:\\
J. Funct. Anal. (2014), {\tt http://dx.doi.org/10.1016/j.jfa.2014.02.039}}
\end{center}
\medskip

 \section{Introduction}
\label{sec:intro}
 
 \subsection*{Motivations}
This paper is devoted to the study of periodic solutions of the rod equation.
A first  physical motivation comes from the study of the response of hyper-elastic rings under the action of an initial radial stretch.
As the nonlinear dispersive waves propagating inside it could eventually lead to cracks, an important problem is the determination of conditions that must be fulfilled in order to prevent their formation. The main issue of the present paper will be  a precise description of crack mechanisms inside  such rings.

A second reason for studying periodic solutions is that periodic waves spontaneously arise also in  hyper-elastic rods: indeed, it has been recently observed that the solitary waves propagating inside an ideally infinite length rod can suddenly feature a transition into waves with finite period as their amplitude increases, see \cite{Dai-Huo}.

Our third motivation comes from the study of shallow water waves inside channels.
Indeed, the Camassa--Holm equation (at least in the dispersionless case) is a particular case,
corresponding to $\gamma=1$, of the rod equation below:
if the motion of small amplitude waves is usually modeled by the KdV equation, larger amplitude waves, and in particular breaking waves, are more accurately  described by the Camassa--Holm equation. 
In fact, both the KdV and  the Camassa--Holm equation 
can be rigorously derived as an asymptotic model from the free surface Euler equations for irrotational inviscid flows,
in the so-called shallow water regime  $\mu=h^2/\lambda^2<\!\!\!<1$, where $h$ and $\lambda$ denote respectively the average elevation of the liquid over the bottom and the characteristic wavelength.
The Camassa--Holm equation models small, but finite, amplitude waves, {\it i.e.\/} waves such that the dimensionless amplitude parameter  $\epsilon=a/h$ satisfies $\epsilon=O(\sqrt\mu)$, where $a$ is the typical amplitude, whereas the derivation of KdV would require the more stringent scaling $\epsilon=O(\mu)$.
The Camassa--Holm equation thus better captures the genuinely nonlinear behavior of larger amplitude waves and, contrary to KdV,  admits both permanent solutions and  solutions that blow up in finite time. For a detailed discussion on these issues, see \cites{CamHolHym94, ConEschActa, AConL09, John02}.

 Let $\S=\R/\Z$ be the unit circle. The Cauchy problem for the periodic rod equation is written as follows:
\begin{equation}
\label{rod}
\begin{cases}
u_t+\gamma uu_x=-\partial_x p*\biggl(\displaystyle\frac{3-\gamma}{2}\,u^2+\frac{\gamma}{2}\,u_x^2\biggr), &\;t\in(0,T), \; x\in\S,\\
u(0,x)=u_0(x).
\end{cases}
\end{equation}
The real parameter $\gamma$ is related to the Finger deformation tensor of the material. Both positive and negative values~$\gamma$ are admissible.
We refer to \cites{Dai-Huo} for more details on the physical background and the mathematical derivation of the model.

The function~$p$ in~\eqref{rod} is the kernel of the convolution operator $(1-\partial^2_x)^{-1}$.
It is the continuous $1$-periodic function given by 
  \begin{equation}
  \label{kernel}
  p(x)=\frac{\cosh(x-[x]-1/2)}{2\sinh(1/2)},
 \end{equation}
where $[\cdot]$ denotes the integer part.

The Camassa--Holm case  $\gamma=1$ is somewhat particular, as in this case~\eqref{rod} inherits a bi-Hamiltonian structure and the equation is completely integrable,  in the sense of infinite-dimensional Hamiltonian systems: in suitable variables (action-angle), 
the flow is equivalent to a linear flow at constant speed on the Jacobi variety 
associated to a (mostly infinite-dimensional) torus, cf.  the discussion in \cite{ConMc}.
Moreover,  the Camassa-Holm equation is a re-expression of geodesic flow on  the diffeomorphism group of the circle, see
the discussion in \cites{ConKol03, Kol07}.
For these reasons, many important results valid for the Camassa--Holm equation do not go through in the general case. 
For example, in the case of the Camassa--Holm equation on the real line, a striking necessary and sufficient condition for the global existence of strong solution can be given in terms on the  initial potential $y_0=u_0-u_{0,xx}$, see~\cite{McKean04}. 
On the other hand, very little is known on the global existence of strong solutions when $\gamma\not=1$: it can even happen (when $\gamma=3$) that all nonzero solutions blow up in finite time.
Smooth solitary waves that are global strong solutions 
were constructed at least for some~$\gamma$, see~\cites{Dai-Huo, Lenells-DCDS06}. 
These are essentially the only known examples of global smooth solutions.

Our working assumption will be that $u_0$ belongs to the Sobolev space $H^s(\S)$, for some $s>3/2$.
Then, for any $\gamma\in\R$, the Cauchy problem for the rod equation is locally well-posed, in the sense that 
there exist a maximal time $0<T^*\le\infty$
and a unique solution $u\in C([0,T^*),H^s(\S))\cap C^1([0,T^*),H^{s-1}(\S))$.
Moreover, the solution~$u$ depends continuously on the initial data.
It is also known that~$u$ admits several invariant integrals, among which the energy integral,
\[
E(u)=\int_\S (u^2+u^2_x)\dd x.
\]
In particular, because of the 
conservation of the Sobolev $H^1$-norm, the solution $u(x,t)$ remains uniformly bounded up to the time~$T^*$.
On the other hand, if $T^*<\infty$ then $\limsup_{t\to T}\|u(t)\|_{H^s}=\infty$ ($s>3/2$) and the precise blowup scenario,
often named wave breaking mechanism, is the following:
\begin{equation}
\label{blowup-scena}
T^*<\infty \iff \liminf_{t\to T^*}\Bigl(\inf_{x\in\S}\gamma u_x(t,x)\Bigr)=-\infty.
\end{equation}
See, {\it e.g.\/} \cite{ConStra00}.

\subsection*{Quick overview of the main results}
We will state our two main theorems in the next section after preparing some notations.
Loosely, the first theorem asserts that if $|\gamma|$ is \emph{not too small}, then there exist a constant 
$\beta_\gamma>0$ such that if
\begin{equation}
\label{asspi}
u'_0(x_0)>\beta_\gamma |u_0(x_0)| \quad \text{if  $\gamma<0$}, \qquad \text{or}\qquad
u'_0(x_0)<-\beta_\gamma |u_0(x_0)|\quad  \text{if $\gamma>0$}
\end{equation}
in at least one point $x_0\in\S$, then the solution arising from $u_0\in H^s(\S)$ must blow up in finite time.

Our second theorem quantifies the above result: it precises what ``$|\gamma|$ not too small'' means, addressing also the delicate issue of finding sharp estimates for~$\beta_\gamma$.
For example, in the particular case of the periodic {\it Camassa--Holm\/} equation we get that a sufficient condition for the blowup is:
\begin{equation}
\label{buch}
\exists\, x_0\in\S\colon\quad u_0'(x_0)<
   -\sqrt{\frac{5}{2}
  - \frac32\cdot\frac{\cosh\frac12\cosh\frac32-1 }{\sinh\frac12\sinh\frac32}\,} \,|u_0(x_0)|.
\end{equation}

An analogue but weaker result was recently established in our previous paper~\cite{BraCMP}, where we dealt with non-periodic
solutions on the whole real line with vanishing boundary conditions as $x\to\infty$.
In the present paper, we take advantage of the specific structure of periodic solutions in order to make improvements on our previous work in two directions.

First of all, the analogue blowup result for the rod equation on~$\R$ could be established in~\cite{BraCMP} only in the range $1\le\gamma\le4$ in the non-periodic case. 
But the relevant estimates on the circle that we will establish turn out to be much stronger.
They allow us to cover blowup results of periodic solutions {\it e.g.\/} for arbitrary large $\gamma$ (or $\gamma$ negative and arbitrary small).
The following corollary of  Theorem~\ref{maintheo} is therefore specific to periodic solutions:
\begin{corollary}
\label{coro-as}
There exists an absolute constant $\beta_\infty$ (it can be checked numerically that $\beta_\infty=0.295\ldots$) with the following property. 
If $u_0\in H^s(\S)$, with $s>3/2$, is such that for some $x_0\in\S$, $u'_0(x_0)>\beta_\infty |u_0(x_0)|$,
 or otherwise 
$ u'_0(x_0)<-\beta_\infty|u_0(x_0)|$,
then the solutions (unique, but depending on~$\gamma$) of the rod equation~\eqref{rod}  arising from 
$u_0$  blow up in finite time respectively if $\gamma<\!\!\!<-1$ or $\gamma>\!\!\!>1$. In both cases, the maximal existence time is $T^*=O(\frac{1}{|\gamma|})$ as $\gamma\to\infty$.
\end{corollary}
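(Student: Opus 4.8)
The plan is to derive the corollary directly from Theorem~\ref{maintheo} by studying the behaviour of the threshold constant $\beta_\gamma$ as $|\gamma|\to\infty$. Theorem~\ref{maintheo} already supplies wave breaking whenever the local condition \eqref{asspi} holds, together with a quantitative estimate on the maximal existence time; the only things left to do are (i) to show that $\beta_\gamma$ stays below a universal value $\beta_\infty$ once $|\gamma|$ is large, so that the purely local hypotheses of the corollary imply \eqref{asspi}, and (ii) to track the dependence on $\gamma$ of the blowup-time bound. I would begin by writing $\beta_\gamma$ in the explicit form provided by Theorem~\ref{maintheo}, i.e. as the square root of a best constant in a weighted convolution/Poincar\'e inequality, the weight being built from the quadratic form $\frac{3-\gamma}{2}u^2+\frac{\gamma}{2}u_x^2$ of equation~\eqref{rod} and the kernel~\eqref{kernel}.

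The heart of the matter is the limit $\gamma\to+\infty$; the case $\gamma\to-\infty$ is symmetric. Normalising the weight by $\gamma/2$, the quadratic form $\frac{3-\gamma}{\gamma}u^2+u_x^2$ converges to the indefinite form $u_x^2-u^2$, whereas the analogous normalisation for $\gamma\to-\infty$ yields $u^2-u_x^2$; the two limiting forms are reflections of one another and therefore produce the same extremal value, which explains why a single constant governs both the hypothesis $u_0'(x_0)>\beta_\infty|u_0(x_0)|$ with $\gamma\ll-1$ and $u_0'(x_0)<-\beta_\infty|u_0(x_0)|$ with $\gamma\gg1$. I would then show that the best constant defining $\beta_\gamma$ converges to the best constant $\beta_\infty$ of the limiting extremal problem attached to $u_x^2-u^2$, and compute $\beta_\infty=0.295\ldots$ numerically. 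Crucially, one must verify that the convergence takes place in such a way that $\beta_\gamma\le\beta_\infty$ for all sufficiently large $|\gamma|$; this comparison is what allows \eqref{asspi} to be inferred from the uniform, $\gamma$-independent local assumption of the corollary, and hence yields the blowup through the scenario \eqref{blowup-scena}.

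For the estimate $T^*=O(1/|\gamma|)$ I would revisit the differential inequality underlying Theorem~\ref{maintheo}. Following the characteristic $\dot q=\gamma u(t,q)$, $q(0)=x_0$, the quantity $u_x(t,q(t))$ obeys a Riccati-type inequality whose dominant term is $-\tfrac{\gamma}{2}u_x^2$; the corresponding comparison ODE reaches $-\infty$ in a time proportional to $1/\gamma$, and carrying the constants of Theorem~\ref{maintheo} through the estimate produces the claimed scaling as $\gamma\to\infty$.

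The step I expect to be the main obstacle is the convergence-and-comparison of the best constants. As $\gamma\to\infty$ the limiting weight $u_x^2-u^2$ is sign-indefinite and the associated quadratic form ceases to be coercive, so one cannot simply pass to the limit inside the minimisation. A direct-method argument (or a $\Gamma$-convergence analysis) is needed to identify the limiting extremal, to rule out concentration or loss of compactness in the minimising sequences, and above all to pin down the direction in which $\beta_\gamma$ approaches $\beta_\infty$, thereby securing the inequality $\beta_\gamma\le\beta_\infty$ on which the whole corollary rests.
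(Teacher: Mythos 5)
Your skeleton is the paper's: deduce Corollary~\ref{coro-as} from Theorem~\ref{maintheo} by controlling $\beta_\gamma$ as $|\gamma|\to\infty$, noting that $\alpha=(3-\gamma)/\gamma\to-1$ in both limits. But the step you single out as the crux --- proving $\beta_\gamma\le\beta_\infty$ for all sufficiently large $|\gamma|$ --- is both unnecessary and, as stated, false. It is unnecessary because the hypothesis of the corollary is a \emph{strict} inequality $u_0'(x_0)<-\beta_\infty|u_0(x_0)|$ (resp.\ $>$), while the conclusion only claims blowup for $\gamma\gg1$ (resp.\ $\gamma\ll-1$), with a threshold allowed to depend on $u_0$. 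Hence mere convergence $\beta_\gamma\to\beta_\infty$, which is assertion~(ii) of Theorem~\ref{theoquant}, suffices: if $u_0(x_0)\neq0$, pick $\epsilon>0$ with $u_0'(x_0)<-(\beta_\infty+\epsilon)|u_0(x_0)|$ and then $|\gamma|$ large enough that $\beta_\gamma<\beta_\infty+\epsilon$; if $u_0(x_0)=0$, any $\gamma$ with $\beta_\gamma<\infty$ works. It is false because the paper's numerics (see the table in the appendix) show $\beta_\gamma$ approaching $\beta_\infty\simeq0.295$ from \emph{above} as $\gamma\to-\infty$ (e.g.\ $\beta_\gamma\simeq0.326$ at $\gamma=-29.476$), so an argument resting on $\beta_\gamma\le\beta_\infty$ collapses on the negative side. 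Similarly, the bound $T^*=O(1/|\gamma|)$ needs no fresh Riccati analysis: it is read off from~\eqref{tmax}, since $u_0'(x_0)^2-\beta_\gamma^2u_0(x_0)^2$ stays bounded away from zero once $\beta_\gamma\to\beta_\infty$.

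The analytic obstacle you anticipate (loss of coercivity of the limiting form $u_x^2-u^2$, requiring a $\Gamma$-convergence or concentration-compactness analysis) is a misdiagnosis. In the paper's framework the functional defining $I(\alpha,\beta)$ in~\eqref{iab} is coercive on $E_{\beta,0}$ whenever $\alpha>-1/C(\beta)$ (Proposition~\ref{prop1}), and the relevant Poincar\'e constants give $-1/C(1)=-(1+4\pi^2)/4\simeq-10.1$ and $-1/C\bigl(\tfrac{e+1}{e-1}\bigr)=\alpha_0\simeq-6.1$; the limit value $\alpha=-1$ sits comfortably inside the coercive regime (pointwise indefiniteness of $-u^2+u_x^2$ is irrelevant; what matters is coercivity of the weighted integral functional). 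In fact no limiting procedure inside the minimization is needed at all: the closed formula~\eqref{ia1} for $I(\alpha,1)$ is valid for all $\alpha>-\tfrac14-\pi^2$, and evaluating the bound~\eqref{bob1} at $\alpha=-1$, i.e.\ $\mu=\tfrac{i\sqrt3}{2}$, turns hyperbolic into trigonometric functions and yields the explicit estimate~\eqref{assii} for $\beta_\infty$. One further correction: both limits $\gamma\to+\infty$ and $\gamma\to-\infty$ lead to the \emph{same} problem $I(-1,\beta)$ and the same definition~\eqref{def:beta}, not to two ``reflected'' extremal problems for $u_x^2-u^2$ and $u^2-u_x^2$; the sign of $\gamma$ enters only by reversing the differential inequalities for $f$ and $g$ in the proof of Theorem~\ref{maintheo}, which is exactly why the two hypotheses of the corollary carry opposite signs.
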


There is a  second important difference between  in the behavior of periodic and non-periodic solutions.
It can happen that two initial data $u_0\in H^s(\S)$ and $\widetilde u_0\in H^s(\R)$ agree on an arbitrarily large finite interval,
and that periodic solution arising from $u_0$ blows up, whereas the solution arising from $\widetilde u_0$ and vanishing at infinity
exists globally.
As a comparison, the blowup criterion in~\cite{BraCMP} for solutions in $H^s(\R)$ of the Camassa--Holm equation reads
$\inf_{\R}(u_0'+|u_0|)<0$; on the other hand, according to~\eqref{buch}, in the periodic case the condition 
$\inf_{\S}(u_0'+0.515\, |u_0|)<0$ would be enough for the development of a singularity.
In general, for $\gamma\in[1,4]$, the coefficient $\beta_\gamma$ in~\eqref{asspi} is considerably lower than
the corresponding coefficient $\beta_{\gamma,\R}$ computed in~\cite{BraCMP} for the blowup criterion of non-periodic solutions.

The most important feature of our blowup criteria~\eqref{asspi}-\eqref{buch} is that it they are 
 \emph{local-in-space\/}.
This means that these criteria involve a condition {\it only on a small neighborhood of a single point\/} of the datum. 
Their validity is somewhat surprising, as equation~\eqref{rod} is non-local. A huge number of previous papers addressed the blowup issue of solutions to equation~\eqref{rod},
(see, {\it e.g.\/} \cites{CamHol93, CamHolHym94, ConstAIF00, ConEschPisa, ConEschActa, Guo-Zhou-SIAM, Hu-Yin10,
Jin-Liu-Zhou10, Liu-MathAnn06, Wah 06,Wah-NoDEA07, Zhou2004, Zhou-Math-Nachr, Zhou-Calc-Var}; (the older references only dealt with the Camassa--Holm equation).
 But the corresponding blowup criteria systematically involved the computation of some global quantities of $u_0$: typically,  conditions of the form $u_0'(x_0)<-c_\gamma\|u_0\|_{H^1(\S^1)}$ or some other integral conditions on $u_0$, or otherwise antisymmetry conditions, etc.

The main idea (inspired to us combining those of \cites{BraCMP, ConstAIF00, McKean98}) will be to study the evolution of $u+\beta u_x$ and $u-\beta u_x$ along the trajectories of the flow map of~$\gamma u\,$, for an appropriate parameter~$\beta=\beta(\gamma)$ to be determined in order to optimize the blowup result.
The main technical issue of the present paper will be the study of a two-parameters family of minimization problems. 
Such minimization problems arise when  computing the best constants  in the relevant convolution estimates.
Calculus of variation tools have been already successfully used for the study of blowup criteria of the rod equation, see, {\it e.g.\/}, \cites{Wah 06,Wah-NoDEA07, Zhou2004, Zhou-Math-Nachr, Zhou-Calc-Var}. The difference with respect to these papers is that our approach requires the minimization of non-coercive functionals.
Once such minima are computed, the proof of the blowup can follow the steps of~\cite{BraCMP}.

We finish this introduction by mentioning a second simple consequence of our main Theorem~\ref{maintheo}, that seems to be of independent interest.
Its result  applies in particular to the case $\gamma=1$. It is a new result (at best of our knowledge)  also for the Camassa--Holm equation.

\begin{corollary}
 \label{cnge}
 Let $u\in C([0,\infty[, H^s(\S))\cap C^1([0,\infty[,H^{s-1}(\S))$, be a global solution of the rod equation~\eqref{rod} with  $\gamma\le\gamma_1^-$ or $\gamma \ge \gamma_1^+$ (where $\gamma_1^-=-1.036\ldots$ and $\gamma_1^+= 0.269\ldots$).
 If $u$ vanishes at some point $(t_0,x_0)\in[0,\infty)\times \S$, then $u$ must be the trivial solution: $u(x,t)\equiv0$ for all $t\ge0$ and $x\in\S$.
\end{corollary}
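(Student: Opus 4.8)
The plan is to read Corollary~\ref{cnge} as the contrapositive of Theorem~\ref{maintheo}, the only input being the local-in-space gradient bound that global existence forces. Fix the time $t_0$ at which $u$ vanishes and observe that $s\mapsto u(t_0+s,\cdot)$ is again a global solution of~\eqref{rod}, with initial datum $u(t_0,\cdot)\in H^s(\S)$. Hence this datum cannot satisfy the blowup condition~\eqref{asspi} at any point $x\in\S$, for otherwise Theorem~\ref{maintheo} would force a finite-time singularity. In the range $\gamma\ge\gamma_1^+>0$ this failure reads
\[
u_x(t_0,x)\ge -\beta_\gamma\,|u(t_0,x)|\qquad\text{for all }x\in\S,
\]
with $\beta_\gamma>0$ the finite constant supplied by the theorem (whose existence is exactly what the restriction $\gamma\ge\gamma_1^+$, resp.\ $\gamma\le\gamma_1^-$, guarantees); for $\gamma\le\gamma_1^-<0$ one gets instead $u_x(t_0,x)\le\beta_\gamma|u(t_0,x)|$.

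Next I would run a one-dimensional comparison argument on the circle for the $C^1$ profile $f:=u(t_0,\cdot)$, treating the case $\gamma>0$ (the case $\gamma<0$ is analogous: the reversed inequality $u_x\le\beta_\gamma|u|$ makes the corresponding exponentially weighted functions non-increasing, and the same endpoint argument applies). On the open set $\{f>0\}$ the bound becomes $f'\ge-\beta_\gamma f$, so that $e^{\beta_\gamma x}f(x)$ is nondecreasing there; on $\{f<0\}$ it becomes $f'\ge\beta_\gamma f$, so that $e^{-\beta_\gamma x}f(x)$ is nondecreasing. Every connected component of $\{f>0\}$ or of $\{f<0\}$ that is a proper subarc of $\S$ has $f=0$ at both of its endpoints, by continuity and maximality. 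But a nondecreasing function vanishing at both endpoints of an interval is identically zero on it, contradicting the strict sign of $f$ inside the component. Therefore neither $\{f>0\}$ nor $\{f<0\}$ can contain a proper subarc: each is either empty or all of $\S$. The hypothesis $f(x_0)=u(t_0,x_0)=0$ excludes the two full-circle alternatives, and we conclude $f\equiv0$, that is $u(t_0,\cdot)\equiv0$.

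Finally, $u(t_0,\cdot)\equiv0$ propagates to the whole strip: forward uniqueness in the local well-posedness theory gives $u\equiv0$ on $[t_0,\infty)\times\S$, while backward uniqueness (the rod equation being locally solvable in both time directions for $H^s$ data) gives $u\equiv0$ on $[0,t_0]\times\S$ as well. This is the asserted trivial solution.

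The genuine difficulty lies entirely upstream, in Theorem~\ref{maintheo}: one must produce the pointwise, local-in-space inequality $u_x\ge-\beta_\gamma|u|$ with a finite explicit $\beta_\gamma$ valid on all of $\S$ and for every time, and this is where the sharp convolution estimates and weighted Poincar\'e inequalities enter. Within the corollary itself the only steps requiring care are the two highlighted above: that global existence transfers the failure of~\eqref{asspi} to the relevant time slice via time-translation invariance, and that the comparison argument correctly disposes of the degenerate ``whole-circle'' components by invoking the single vanishing point $x_0$. I do not expect any further obstacle beyond bookkeeping the two sign regimes of $\gamma$.
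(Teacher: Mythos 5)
Your proposal is correct and follows essentially the same route as the paper: global existence forces the failure of the blowup condition~\eqref{assp} at every point of every time slice, and the same exponential-weight monotonicity argument on the sign components of $u(t_0,\cdot)$ on the circle then yields $u(t_0,\cdot)\equiv 0$. The only (harmless) deviation is the final propagation step: the paper concludes via conservation of the $H^1$ energy $E(u)$, which sidesteps your appeal to backward-in-time uniqueness (itself justifiable, e.g.\ by the invariance of~\eqref{rod} under $(t,x)\mapsto(-t,-x)$, since $p$ is even).
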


Corollary~\ref{cnge} improves an earlier result by A.~Constantin and J.~Escher \cite{ConEschCPAM},
asserting that the trivial solution is the only global solution of the periodic Camassa--Holm equation such that \emph{for all $t\ge0$, $\exists\, x_t\in\S$ such that $u(t,x_t)=0$}.
Basically, we manage to replace their condition~``$\forall\, t$\dots'' by the much weaker one ``$\exists\,t$\dots''.
More importantly,  unlike~\cite{ConEschCPAM}, our approach will make use of only few properties of the equation, and is more suitable for generalizations.

It might be surprising that our results {\it a priori\/} excludes a small neighborhood of the origin for the parameter~$\gamma$. In fact, such restriction might be purely technical. However, one should observe that
 $\gamma=0$ \emph{must be excluded}. The reason is that for $\gamma=0$ all solutions arising from $u_0\in H^s(\S)$ are global in time. Indeed,  the blowup scenario~\eqref{blowup-scena} is never fulfilled.
It is worth noticing that for  $\gamma=0$  the rod equation reduces to the BBM equation  introduced by Benjamin, Bona and Mahony in~\cite{BBM72} as a model for surface wave in channels.

We will establish a result similar to that of Corollary~\ref{cnge} for non-periodic solutions vanishing for $x\to\infty$ (see Corollary~\ref{cnge2} at the end of the paper for the precise statement).
In this case, we will need that the global solution~$u$ is such that 
$u(t_0,x_1)=u(t_0,x_2)=0$ in at least two different points, or otherwise that $u(t_0,\cdot)$ decays
sufficiently fast as $x\to\infty$ to conclude that $u(t,x)\equiv0$ for all $(t,x)\in \R^+\times \R$.

\subsection*{Organization of the paper.}
In Section~\ref{sec:two} we state our main results, Theorem~\ref{maintheo} and Theorem~\ref{theoquant}.
The first theorem is proved  in Section~\ref{sec:first p}. In Section~\ref{sec:fo} we study a minimization problem that will play an important role for the proof  of Theorem~\ref{theoquant}, given in Section~\ref{sec:tq}.
Corollary~\ref{coro-as} will immediately follows from assertion~(ii) of this theorem. 
We will study in more detail the Camassa--Holm equation in Section~\ref{sec:ch}.
Corollary~\ref{cnge} is established in Section~\ref{sec:cor} where we also provide some new results for non-periodic solutions.
An appendix showing the agreement between our theorems and some numerical results concludes the paper.

\section{The main results}
\label{sec:two}

We start preparing some notations.
For any real constant $\alpha$ and $\beta$, let $I(\alpha,\beta)\ge -\infty$ defined by
\begin{equation}
\label{iab}
I(\alpha,\beta)=
\inf\biggl\{ \int_0^1 \bigl(p+\beta p_x)\Bigl(\alpha u^2+ u_x^2\Bigr)\dd x \colon u\in H^1(0,1),
\; u(0)=u(1)=1\biggr\}.
\end{equation}
Proposition~\ref{prop1} below will characterize the set of parameters $(\alpha,\beta)$ for which
the functional appearing in~\eqref{iab} is bounded from below as well as the subset for which the infimum is achieved.

We also introduce, for $\gamma\in\R^*$, the quantity $\beta_\gamma\in[0,+\infty]$ defined by
\begin{equation}
\label{def:beta}
\beta_\gamma=\inf\Bigl\{ \beta\in\R^+\colon 
\beta^2+ I\bigl(\textstyle\frac{3-\gamma}{\gamma},\beta\bigr)-\textstyle\frac{3-\gamma}{\gamma}\ge0\Bigr\},
\end{equation}
with the usual convention that $\beta_\gamma=+\infty$ if the infimum is taken on the empty set.

Our main result is the following blowup theorem:
\begin{theorem}
\label{maintheo}
Let $\gamma\in\R^*$ be such that 
$\beta_\gamma<+\infty$.
Let $u_0\in H^s(\S)$ with $s>3/2$ and assume that there exists $x_0\in\S$, such that
\begin{equation}
\label{assp}
u'_0(x_0)>\beta_\gamma |u_0(x_0)| \quad \text{if  $\gamma<0$}, \qquad or\qquad
u'_0(x_0)<-\beta_\gamma |u_0(x_0)|\quad  \text{if $\gamma>0$}.
\end{equation}
Then the corresponding solution  $u$ of equation~\eqref{rod} in $C([0,T^*),H^s(\S))\cap C^1([0,T^*),H^{s-1}(\S))$  arising from $u_0$ blows up in finite time. Moreover, the maximal time~$T^*$ is estimated by
\begin{equation}
\label{tmax}
T^*\le\frac{2}{\gamma\sqrt{u_0'(x_0)^2-\beta_\gamma^2 u_0(x_0)^2}\,}
\end{equation}
and, for some $x(t)\in\S$, the blowup rate is 
\begin{equation} \label{burate} 
 u_x(t,x(t))\sim -\frac{2}{\gamma(T^*-t)} \qquad\text{as $t\to T^*$}.
\end{equation}
\end{theorem}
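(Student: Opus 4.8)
The plan is to run the Lagrangian/characteristics method of \cite{ConstAIF00, McKean98, BraCMP}. Let $q(t)$ solve $\dot q(t)=\gamma u(t,q(t))$ with $q(0)=x_0$, and set $m(t)=u(t,q(t))$ and $M(t)=u_x(t,q(t))$. Writing $f=\frac{3-\gamma}{2}u^2+\frac{\gamma}{2}u_x^2=\frac{\gamma}{2}\bigl(\alpha u^2+u_x^2\bigr)$ with $\alpha=\frac{3-\gamma}{\gamma}$, and using the identity $p_{xx}*g=p*g-g$ (since $p$ is the kernel of $(1-\partial_x^2)^{-1}$) together with the $x$-derivative of~\eqref{rod}, I would first derive the closed pair of equations along the flow,
\begin{align*}
\dot m &= -(p_x*f)(q),\\
\dot M &= \tfrac{3-\gamma}{2}\,m^2-\tfrac{\gamma}{2}\,M^2-(p*f)(q).
\end{align*}
For $u_0\in H^s(\S)$ with $s>3/2$ these pointwise identities are classical for smooth data and extend by the usual approximation argument.

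The heart of the matter is to turn the nonlocal terms into algebraically closed quantities governed by $I$. Setting $w(x)=u(q(t)+x)/m(t)$ (for $t$ with $m(t)\neq0$) and using that $p$ is even and $p_x$ odd, a change of variables gives
\[
\bigl[(p\pm\beta_\gamma p_x)*(\alpha u^2+u_x^2)\bigr](q(t)) = m(t)^2\!\int_0^1 (p\mp\beta_\gamma p_x)(\alpha w^2+w_x^2)\,\dd x \ge I(\alpha,\mp\beta_\gamma)\,m(t)^2,
\]
where $w(0)=w(1)=1$. A reflection $x\mapsto 1-x$ yields $I(\alpha,\beta_\gamma)=I(\alpha,-\beta_\gamma)$, so both combinations are bounded below by $I(\alpha,\beta_\gamma)m^2$. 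Inserting this into $\dot M\pm\beta_\gamma\dot m$ and invoking the defining property $\beta_\gamma^2+I(\alpha,\beta_\gamma)-\alpha\ge0$ from~\eqref{def:beta} (equivalently $\alpha-I(\alpha,\beta_\gamma)\le\beta_\gamma^2$), I obtain, for $\gamma>0$,
\begin{align*}
\dot M+\beta_\gamma\dot m &\le \tfrac{\gamma}{2}\bigl(\beta_\gamma^2 m^2-M^2\bigr),\\
\dot M-\beta_\gamma\dot m &\le \tfrac{\gamma}{2}\bigl(\beta_\gamma^2 m^2-M^2\bigr).
\end{align*}

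I would then set $A=M+\beta_\gamma m$ and $B=M-\beta_\gamma m$, so that $AB=M^2-\beta_\gamma^2 m^2$ and both inequalities read $\dot A,\dot B\le-\frac{\gamma}{2}AB$. Hypothesis~\eqref{assp} forces $A(0),B(0)<0$; since the right-hand sides are then $\le0$, a continuity argument keeps $A,B<0$ on the whole existence interval, whence $P:=AB>0$. Differentiating and using $-(A+B)=-2M=|A|+|B|\ge2\sqrt{AB}$ produces the autonomous Riccati inequality $\dot P\ge\gamma P^{3/2}$, with $P(0)=u_0'(x_0)^2-\beta_\gamma^2 u_0(x_0)^2>0$. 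Comparison with $\dot P=\gamma P^{3/2}$ forces $P$, hence $|M|=|u_x(t,q(t))|$, to blow up no later than $T^*\le 2/(\gamma\sqrt{P(0)}\,)$, which is exactly~\eqref{tmax}; in particular $\inf_x\gamma u_x\to-\infty$, triggering~\eqref{blowup-scena}. The case $\gamma<0$ is handled identically after reversing the relevant inequalities: then $A(0),B(0)>0$, $P$ blows up at rate $|\gamma|P^{3/2}$, and the same time bound holds with $|\gamma|$ in the denominator.

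Finally, for the rate~\eqref{burate} I would exploit conservation of the $H^1$ energy $E(u)$, which bounds $|m(t)|\le\|u\|_{L^\infty}\le C$ and $|(p*f)(q)|\le\|p\|_{L^\infty}\|f\|_{L^1}\le C$ uniformly in $t$; feeding these back into the equation for $\dot M$ gives $\bigl|\dot M+\tfrac{\gamma}{2}M^2\bigr|\le C$, and since $M\to\mp\infty$ a standard ODE argument yields $M(t)\sim-2/(\gamma(T^*-t))$. The main obstacle, granting Proposition~\ref{prop1} and the minimization analysis, is the second step: producing the sharp constant $I(\alpha,\beta_\gamma)$ in the convolution bound with the correct normalization (and covering the exceptional times where $m=0$ by continuity), so that the identity $\beta_\gamma^2+I-\alpha\ge0$ closes precisely into $AB=M^2-\beta_\gamma^2 m^2$ and the clean Riccati $\dot P\ge|\gamma|P^{3/2}$.
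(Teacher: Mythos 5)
Your proposal is correct and is essentially the paper's own argument: your quantities $A=M+\beta_\gamma m$ and $B=M-\beta_\gamma m$ are, up to sign, the paper's $f$ and $g$, your normalized convolution bound is exactly Lemma~\ref{lem:convo} (including the parity $I(\alpha,\beta)=I(\alpha,-\beta)$), and your Riccati inequality $\dot P\ge\gamma P^{3/2}$ for $P=AB$ is Lemma~\ref{lemmadif} with $c=\gamma/2=\tfrac{3}{2(\alpha+1)}$, yielding the same bound \eqref{tmax} and the same forced-Riccati rate argument. The one place you deviate is in invoking $\beta_\gamma^2+I(\alpha,\beta_\gamma)-\alpha\ge0$ as the ``defining property'' of $\beta_\gamma$: since \eqref{def:beta} is an infimum, this needs the defining set to be closed (which is true, because $I$ is an infimum of affine functions of $\beta$ and hence upper semicontinuous), whereas the paper sidesteps this by using the strict inequality in \eqref{assp} to choose some $\beta>\beta_\gamma$ in the set and then letting $\beta\downarrow\beta_\gamma$ in the final estimate.
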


Theorem~\ref{maintheo} is meaningful only if $\gamma$ is such that $\beta_\gamma<+\infty$.
The validity of such condition is {\it a priori\/} not clear, as it might happen that the set in equation~\eqref{def:beta} 
is empty. 
 An important part of the present paper will be devoted to the technical issue of discussing the validity of the
 condition $\beta_\gamma<+\infty$ 
and next estimating $\beta_\gamma$.
This will be done by establishing some sharp estimates on $I(\alpha,\beta)$.

In order to state our next theorem, let us introduce the complex number
\[
 \mu=\frac12\sqrt{1+4(3-\gamma)/\gamma}, \qquad \gamma\not=0,
\]
where $\sqrt{1+4(3-\gamma)/\gamma}$ denotes any of the two complex square roots.
We also consider the four constants (that will be constructed in~\eqref{4ga} below):
\begin{equation*}
\begin{matrix}
 \gamma_1^-=-1.036\ldots \quad
 &\gamma_1^+=0.269\ldots\\
 \gamma_2^-=-1.508\dots\quad
& \gamma_2^+=0.575\dots
\end{matrix}
\end{equation*}

\begin{theorem}
\mbox{}
\label{theoquant}
\begin{itemize}
\item[(i)]
For any $\gamma\in (-\infty,\gamma_1^-]\cup[\gamma_1^+,+\infty)$, we have
$\beta_\gamma<+\infty$, so that Theorem~\ref{maintheo} applies in such range.
More precisely, if $\gamma\in(-\infty,\gamma_2^-]\cup[\gamma_2^+,\infty)$, then
\begin{equation}
 \label{bobi}
 \beta_\gamma\le\sqrt{\frac{3}{\gamma}-\frac12
  - \mu\cdot\frac{\cosh\frac12\cosh\mu-1 }{\sinh\frac12\sinh\mu}}.
\end{equation}
If otherwise  $\gamma\in[\gamma_2^-,\gamma_1^-]\cup[\gamma_1^+,\gamma_2^+]$,
then $\beta_\gamma$ can be estimated as in inequality~\eqref{est:betb} below (in~\eqref{est:betb}, $P_\nu$ denotes Legendre function of the first kind of degree~$\nu$).
\item[(ii)]
The limit $\beta_\infty=\lim_{\gamma\to\pm\infty}\beta_\gamma$ does exist and 
\begin{equation}
 \label{assii}
\beta_\infty\le 
 \sqrt{
 \frac{\sqrt3\bigl(1-\cosh\frac12\cos\frac{\sqrt3}{2}\bigr)}{2\sinh\frac12\sin\frac{\sqrt3}{2}\,}  -\frac12   }
 \,=0.296\dots.
\end{equation}
\end{itemize}
\end{theorem}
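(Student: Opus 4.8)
The plan is to turn everything into an explicit evaluation of the scalar functional $I(\alpha,\beta)$ of \eqref{iab} and then to decide, for $\alpha=\tfrac{3-\gamma}{\gamma}$, the inequality defining $\beta_\gamma$ in \eqref{def:beta}. Since $\beta_\gamma$ is an \emph{infimum}, exhibiting one value $\beta$ with $\beta^2+I(\alpha,\beta)-\alpha\ge0$ already yields an upper bound on $\beta_\gamma$; the catch is that this forces me to bound the infimum $I(\alpha,\beta)$ from \emph{below}, so a mere test function (which only bounds $I$ from above) is useless. Writing $w=p+\beta p_x$ and observing that the kernel \eqref{kernel} satisfies $p''=p$ on $(0,1)$, hence $w''=w$, I would base the lower bound on the completing-the-square identity
\[
\int_0^1 w\,(\alpha u^2+u_x^2)\dd x=\int_0^1 w\,(u_x+\phi u)^2\dd x+\int_0^1\bigl((w\phi)_x-w\phi^2+\alpha w\bigr)u^2\dd x-\bigl[w\phi\bigr]_0^1 ,
\]
valid for every $\phi$ whenever $u(0)=u(1)=1$. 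When $w>0$ and $\phi$ is chosen so that the middle integrand is nonnegative (a Riccati differential inequality $(w\phi)_x-w\phi^2+\alpha w\ge0$), this gives $I(\alpha,\beta)\ge -[w\phi]_0^1=w(0)\phi(0)-w(1)\phi(1)$, with equality exactly when $\phi$ solves the associated Riccati equation, i.e. when $u=\exp(-\int\phi)$ is the genuine minimizer furnished by Proposition~\ref{prop1}. In both cases $I$ collapses to boundary data.

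Next I would solve the underlying linear Euler–Lagrange equation $(w u_x)_x=\alpha\,w\,u$ in closed form. Using $w''=w$ one writes $w\propto\cosh(x-\tfrac12+c)$ when $|\beta|<1$ and $w\propto\sinh(x-\tfrac12+c')$ when $|\beta|>1$; in the variable $y=x-\tfrac12+c$ the equation becomes $u_{yy}+\tanh y\,u_y-\alpha u=0$ (resp. with $\coth y$), and the substitution $s=\sinh y$ (resp. $s=\cosh y$) turns it into the Legendre equation $(1\pm s^2)u_{ss}\pm 2s u_s-\alpha u=0$ with $\nu(\nu+1)=\alpha$, that is $\nu=\mu-\tfrac12$, where $\mu=\tfrac12\sqrt{1+4\alpha}=\tfrac12\sqrt{3(4-\gamma)/\gamma}$ is precisely the quantity of the statement (note $\mu^2=\alpha+\tfrac14$). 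In the regime $|\beta|<1$, where $w$ has constant sign, I would use an \emph{elementary} Riccati supersolution built from $\cosh\bigl(\mu(x-\tfrac12)\bigr)$ and $\sinh\bigl(\mu(x-\tfrac12)\bigr)$; evaluating the boundary term $[w\phi]_0^1$ at $x=0,1$ produces the combinations $\cosh\tfrac12,\sinh\tfrac12,\cosh\mu,\sinh\mu$, and solving $\beta^2=\alpha-I(\alpha,\beta)$ for the threshold $\beta$ gives exactly the right-hand side of \eqref{bobi}. In the complementary regime $|\beta|>1$, where $w$ may vanish inside $(0,1)$ and the elementary comparison fails, one must retain the true Legendre functions $P_\nu$, which yields the sharper but less explicit bound \eqref{est:betb}.

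It then remains to identify the four thresholds and the finiteness range. The numbers $\gamma_2^\pm$ are those at which the elementary bound reaches $\beta_\gamma=1$, i.e. the transition between the $\cosh$- and $\sinh$-shaped weights: beyond them \eqref{bobi} applies, while on the collar $[\gamma_2^-,\gamma_1^-]\cup[\gamma_1^+,\gamma_2^+]$ the Legendre expression \eqref{est:betb} is needed. The numbers $\gamma_1^\pm$ mark where the set in \eqref{def:beta} becomes empty: as $\gamma$ enters the excluded neighborhood of the origin the relevant Riccati solution blows up inside $[0,1]$ for \emph{every} admissible $\beta$, so that $\beta_\gamma=+\infty$. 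Proving that this happens \emph{exactly} outside $(\gamma_1^-,\gamma_1^+)$ — via a monotonicity analysis of the boundary functional $w(0)\phi(0)-w(1)\phi(1)$ in $\beta$ and $\gamma$, together with the sign-changing (Legendre) case — is the most delicate point and the main obstacle of the argument.

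Finally, for assertion~(ii) I would let $\gamma\to\pm\infty$. Then $\alpha\to-1$ and $\mu\to\tfrac{i\sqrt3}{2}$, so $\cosh\mu\to\cos\tfrac{\sqrt3}{2}$ and $\sinh\mu\to i\sin\tfrac{\sqrt3}{2}$. Each ingredient of \eqref{bobi} is continuous in $1/\gamma$ up to the limit, which both guarantees that $\beta_\infty=\lim_{\gamma\to\pm\infty}\beta_\gamma$ exists and lets me pass to the limit in \eqref{bobi}: the purely hyperbolic expression becomes the trigonometric one, and since $\tfrac3\gamma\to0$ one is left precisely with
\[
\beta_\infty\le\sqrt{\frac{\sqrt3\bigl(1-\cosh\frac12\cos\frac{\sqrt3}{2}\bigr)}{2\sinh\frac12\sin\frac{\sqrt3}{2}}-\frac12}\,,
\]
which is the bound \eqref{assii}.
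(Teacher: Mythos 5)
Your completing-the-square/Riccati identity is correct, and it is a legitimate substitute for the paper's integration-by-parts evaluation of the functional $J$; the problem is the engine you attach to it. For $|\beta|<1$ the Euler--Lagrange equation $(wu_x)_x=\alpha w u$ is \emph{not} elementarily solvable: with the cosh-shaped weight, your own substitution $s=\sinh y$ produces $(1+s^2)u_{ss}+2su_s-\alpha u=0$, which is the Legendre equation of \emph{imaginary} argument, so its solutions are again non-elementary Legendre (conical) functions, not combinations of $\cosh\bigl(\mu(x-\tfrac12)\bigr)$ and $\sinh\bigl(\mu(x-\tfrac12)\bigr)$. Constant coefficients, and hence elementary exponential solutions, occur exactly at $\beta=\pm1$, where $w\propto e^{\pm x}$; this is the only source of the quantity $-\tfrac12+\mu\,\frac{\cosh\frac12\cosh\mu-1}{\sinh\frac12\sinh\mu}$, which is the exact value $I(\alpha,1)$ and \emph{not} the solution of the fixed-point equation $\beta^2=\alpha-I(\alpha,\beta)$ (that solution is the true $\beta_\gamma$, numerically $0.513$ at $\gamma=1$, whereas \eqref{bobi} gives $0.515$). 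What converts the single value $I(\alpha,1)$ into the bound \eqref{bobi} is the concavity and evenness of $\beta\mapsto I(\alpha,\beta)$ (Proposition~\ref{prop2}, inequality~\eqref{affi}): it yields $I(\alpha,\beta)\ge I(\alpha,1)$ for $0\le\beta\le1$, so $\beta^*=\sqrt{\alpha-I(\alpha,1)}$ belongs to the set in \eqref{def:beta} as soon as $\beta^*\le1$, i.e.\ exactly for $\gamma\in(-\infty,\gamma_2^-]\cup[\gamma_2^+,\infty)$; and on the collar $[\gamma_2^-,\gamma_1^-]\cup[\gamma_1^+,\gamma_2^+]$ the same concavity gives the affine interpolation between $I(\alpha,1)$ and the Legendre value $I(\alpha,\frac{e+1}{e-1})$, whose quadratic-root analysis is precisely \eqref{est:betb}. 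Your proposal never invokes this monotonicity step, and without it neither \eqref{bobi} nor \eqref{est:betb} follows from the boundary-term formula.

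The treatment of the finiteness range and of part (ii) also has genuine gaps. The theorem does not assert that $\beta_\gamma=+\infty$ inside $(\gamma_1^-,\gamma_1^+)$, and that assertion is false: the paper's appendix shows numerically that $\beta_\gamma<+\infty$ on the strictly larger range $(-\infty,-0.817\ldots]\cup[0.262\ldots,+\infty)$. So the step you single out as ``the most delicate point'' (and admit you cannot prove) is both unnecessary and unprovable; what is required is only a sufficient condition, and the paper gets it essentially by definition: $\alpha_1^\pm$ are the zeros of $\alpha\mapsto\bigl(\frac{e+1}{e-1}\bigr)^2+I\bigl(\alpha,\frac{e+1}{e-1}\bigr)-\alpha$, so that for $\alpha\in[\alpha_1^-,\alpha_1^+]$ the endpoint value $\beta=\frac{e+1}{e-1}$ itself lies in the set \eqref{def:beta}, giving $\beta_\gamma<+\infty$ there. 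Finally, in part (ii), continuity of the right-hand side of \eqref{bobi} in $1/\gamma$ does give $\limsup_{\gamma\to\pm\infty}\beta_\gamma\le$ the bound \eqref{assii}, but it cannot ``guarantee that $\beta_\infty=\lim_{\gamma\to\pm\infty}\beta_\gamma$ exists'': convergence of an upper bound says nothing about convergence of the quantity it bounds. One must argue on $\beta_\gamma$ itself, using that it depends on $\gamma$ only through $\alpha=(3-\gamma)/\gamma\to-1$ and the continuity of $I$ in $\alpha$, which is how the paper identifies $\beta_\infty$ with the value obtained by taking $\alpha=-1$ in \eqref{def2:beta}.
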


It is worth observing that we have $\beta_\gamma=0$ if and only if $\gamma=3$. This can be checked directly from the definitions~\eqref{iab}-\eqref{def:beta}. Accordingly, the right-hand side in~\eqref{bobi} vanishes for 
$\gamma=3$. In particular, we recover the known fact (see \cite{ConStra00}) that for $\gamma=3$ any nonzero initial datum $u_0\in H^s(\S)$ (with $s>3/2$) gives rise to a solution that blows up in finite time. Moreover, the maximal time~$T^*$ satisfies
\[T^*\le \frac{2}{3}\sqrt{-\inf_{x\in\S} u_0'(x)}\,.\]

%

The upper bounds~\eqref{bobi}--\eqref{assii} are not optimal but, strikingly, they are {\it almost sharp\/}.
Indeed we can compute the numerical approximation of $\beta_\gamma$ with arbitrary high precision.
We find that the error between the above bounds and the numerical value is only of order $10^{-3}$.
For example, we find $\beta_\infty=0.295\dots$ that is indeed very close to the bound~\eqref{assii}.
Moreover, in the Camassa--Holm case, we find numerically that $\beta_1=0.513\ldots$. This is in good agreement with estimate~\eqref{bobi}, that for $\gamma=1$ provided us with the bound $\beta_1\le 0.515\dots$.

We will devote the appendix to a more detailed discussion of the numerical results. 
Such analysis will also show that the effective range of applicability of Theorem~\ref{maintheo}
is slightly larger than the range  $\gamma\in(-\infty,\gamma_1^-]\cup[\gamma_1^+,\infty)$ predicted by
Theorem~\ref{theoquant}. The reader should compare Figure~\ref{upbound-betag} with Figure~\ref{courbe-beta-gamma} at the end of the paper: the former plot is obtained analytically and the latter numerically.

\begin{figure}
\includegraphics[width=15cm,height=9cm]{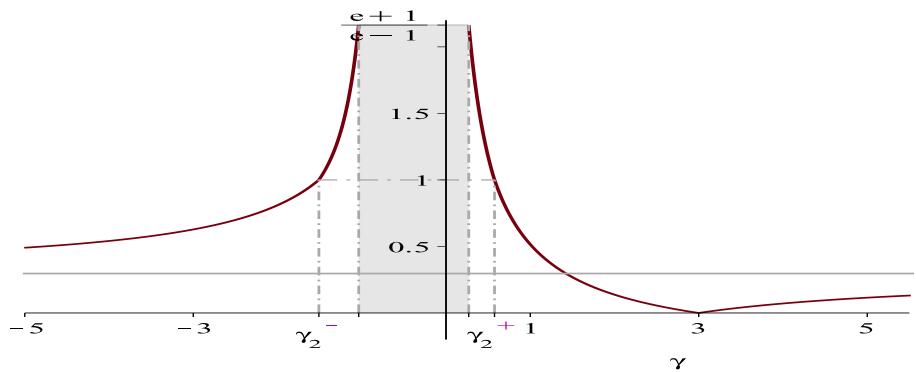}{\centering}
\vskip-3.5cm
\caption{The upper-bound estimate of~$\beta_\gamma$ given by Theorem~\ref{theoquant}. The estimate is valid outside the interval $[\gamma_1^-,\gamma_1^+]$ 
(gray region).}
\label{upbound-betag}
\end{figure}

This paper does not address the problem of the continuation of the solution after the blowup as
this issue is extensively studied in the literature. See, {\it e.g.\/}  \cites{BreCon07, BreCon07bis, HolJDE07, Mus07}.

\section{First properties of~$I(\alpha,\beta)$ and proof of Theorem~\ref{maintheo}}
\label{sec:first p}

\bigskip
For any real~$\beta$, let us consider the 1-periodic function
\begin{equation} 
\label{32}
\omega(x)=p(x)+\beta p'(x),
\end{equation}
where $p$ is the kernel introduced in~\eqref{kernel} and $p'$ denotes the distributional derivative on~$\R$, that agrees in this case with the classical a.e. pointwise derivative on~$\R\backslash\Z$.

We would like  to make use of $\omega$ as a weight function. The non-negativity condition
$\omega\ge0$ is equivalent to the inequality $\cosh(1/2)\ge \pm\beta \sinh(1/2)$, {\it i.e.}, to the condition
\begin{equation} 
\label{bbeta}
 -\textstyle\frac{e+1}{e-1}\le \beta\le\textstyle \frac{e+1}{e-1}.
\end{equation}
Throughout this section, we will work under the above condition on~$\beta$.

Let us introduce the weighted Sobolev space
\begin{equation}
\label{def:Eb}
E_\beta
=\Bigl\{ u\in L^1_{\text{loc}}(0,1)\colon
 \|u\|_{E_\beta}^2\equiv\int_0^1\omega(x)( u^2+u_x^2)(x)\dd x<\infty\Bigr\},
\end{equation}
where  the derivative is understood in the distributional sense.
Notice that $E_\beta$ agrees with the classical Sobolev space $H^1(0,1)$ when $|\beta|<\frac{e+1}{e-1}$,
as in this case $\omega$ is bounded and bounded away from~$0$, and the two norms  $\|\cdot\|_{E_\beta}$ and $\|\cdot\|_{H^1}$
are equivalent: in particular, in this case if $u\in E_\beta$ then $u=\tilde u$ a.e., where $\tilde u\in C([0,1])$.
The situation is different for $\beta=\pm\frac{e+1}{e-1}$ as $E_\beta$ is strictly larger than $H^1(0,1)$ in this case. Indeed, we have 
\begin{equation}
\label{def:omegal}
\omega(x)=\frac{2e}{(e-1)^2}\sinh(x), \qquad x\in(0,1), \qquad (\text{if $\beta=\textstyle\frac{e+1}{e-1}$}).
\end{equation}
An element~$u$ of~$E_{(e+1)/(e-1)}$, after modification on a set of measure zero, agrees with a function $\tilde u$ that is
continuous on $(0,1]$, but may be unbounded for $x\to0^+$ (for instance, $|\log (x/2)|^{1/3}\in E_{(e+1)/(e-1)}$).
In the same way,
\begin{equation}
\label{def:omegal2}
\omega(x)=\frac{2e}{(e-1)^2}\sinh(1-x), \qquad x\in(0,1), \qquad (\text{if $\beta=\textstyle-\frac{e+1}{e-1}$}).
\end{equation}
After modification on a set of measure zero, the elements of~$E_{-(e+1)/(e-1)}$ are continuous on $[0,1)$,
 but may be unbounded for $x\to1^-$.

Let us consider the closed subspace $E_{\beta,0}$ of~$E_\beta$ defined
as the closure of $C_c^\infty(0,1)$ in~$E_\beta$.
Notice that, with slightly abusive notation, consisting in identifying $u$ with its continuous representative $\tilde u$ we have:
\begin{equation}
 \label{ebo}
 \begin{split}
 &E_{\beta,0}=H^1_0(0,1)=\{u\in H^1(0,1)\colon u(0)=u(1)=0\},\qquad\text{if $|\beta|<\textstyle\frac{e+1}{e-1}$}.\\
 \end{split}
 \end{equation}
On the other hand, in the limit cases for~$\beta$ we have the following: 
\begin{lemma}\mbox{}
 \label{lem:bigob}
 \begin{itemize}
  \item[-]
 If $u\in E_{(e+1)/(e-1),0}$, then $u(x)=O(\sqrt{|\log x|})$ as $x\to0^+$ and $u(1)=0$.
 \item[-]
  If $u\in E_{-(e+1)/(e-1),0}$, then $u(x)=O(\sqrt{|\log(1-x)|})$ as $x\to1^-$ and $u(0)=0$.
 \end{itemize}
\end{lemma}

\begin{proof}
 We consider only the case $\beta=\frac{e+1}{e-1}$ as in the other case the proof is similar.
 The condition $u(1)=0$ follows from the fact that $\omega$ is bounded and bounded away from the origin in a left 
 neighborhood of~$1$.
 Moreover.
 \[
\begin{split}
|u(x)|=\left|\int_x^1 u'(y)\dd y\right|
&\le 
\biggl(\int_0^1 \omega(y) u'(y)^2\dd y\biggr)^{1/2}\biggl(\int_x^1 \frac{1}{\omega(y)}\dd y\biggr)^{1/2}\\
&\le C\|u\|_{E_\beta}\sqrt{|\log x|}.
\end{split}
\]
\end{proof}

%
%
%
%

The elements of $E_{\beta,0}$ satisfy to the weighted Poincar\'e inequality below:
\begin{lemma}
\label{lem:poi}
For all $-\frac{e+1}{e-1}\le \beta\le \frac{e+1}{e-1}$, there exists a constant $C>0$ such that 
\begin{equation}
\label{poincare}
\forall\, v\in E_{\beta,0} \colon 
\int_0^1\omega(x)v(x)^2\dd x \le  C \int_0^1\omega(x)v_x(x)^2\dd x.
\end{equation}
\end{lemma}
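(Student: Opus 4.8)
The plan is to prove the weighted Poincaré inequality~\eqref{poincare} by a combination of a density reduction, an explicit pointwise estimate, and a compactness/contradiction argument that handles the degenerate endpoint cases uniformly. First I would reduce to the case of smooth compactly supported test functions: since $E_{\beta,0}$ is by definition the closure of $C_c^\infty(0,1)$ in the $\|\cdot\|_{E_\beta}$-norm, and since both sides of~\eqref{poincare} are continuous with respect to this norm, it suffices to establish the inequality for $v\in C_c^\infty(0,1)$ and then pass to the limit. This is the standard move, and it lets me work with genuinely smooth, compactly supported functions throughout.

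For the non-degenerate range $|\beta|<\frac{e+1}{e-1}$ the inequality is essentially classical. Here $\omega$ is bounded above and below away from zero on $[0,1]$, so $\|\cdot\|_{E_\beta}$ is equivalent to the usual $H^1(0,1)$-norm by the remark following~\eqref{def:Eb}, and $E_{\beta,0}=H^1_0(0,1)$ by~\eqref{ebo}. The ordinary Poincaré inequality on $H^1_0(0,1)$ then gives the result with a constant depending only on the bounds for $\omega$. The delicate part is the two degenerate endpoints $\beta=\pm\frac{e+1}{e-1}$, where $\omega$ vanishes at one boundary point (say $\omega(x)=\frac{2e}{(e-1)^2}\sinh(x)$ vanishing at $x=0$ when $\beta=\frac{e+1}{e-1}$, cf.~\eqref{def:omegal}). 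I would treat the case $\beta=\frac{e+1}{e-1}$ explicitly and note the other is symmetric. The key observation is that for $v\in C_c^\infty(0,1)$ we have $v(1)=0$, so $v(x)=-\int_x^1 v'(y)\dd y$, and the Cauchy–Schwarz argument already displayed in the proof of Lemma~\ref{lem:bigob} controls $v(x)^2$ by $\|v\|_{E_\beta}^2$ times a weight integral. The concrete inequality I would aim to exploit is a Hardy-type bound: one wants
\begin{equation*}
\int_0^1 \omega(x)\,v(x)^2\dd x \le C\int_0^1\omega(x)\,v_x(x)^2\dd x,
\end{equation*}
and since $\omega(x)\asymp x$ near $0$, this is precisely a weighted Hardy inequality with weight $x$, which holds with an explicit constant.

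The main obstacle I anticipate is obtaining a \emph{single} constant $C$ valid \emph{uniformly} across the whole closed range $-\frac{e+1}{e-1}\le\beta\le\frac{e+1}{e-1}$, rather than a constant that blows up as $\beta$ approaches the degenerate endpoints. The cleanest way around this is a compactness argument by contradiction: if no uniform constant existed, I could extract a sequence $\beta_n$ and normalized functions $v_n\in E_{\beta_n,0}$ with $\int_0^1\omega_n v_n^2=1$ but $\int_0^1\omega_n (v_n)_x^2\to0$; passing to a limit $\beta_n\to\beta_\infty$ and using weak compactness together with the lower semicontinuity of the Dirichlet part of the energy, one would produce a nonzero $v$ with vanishing weighted derivative energy, forcing $v$ to be constant, which contradicts the boundary vanishing encoded in $E_{\beta_\infty,0}$ (via Lemma~\ref{lem:bigob}). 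Since in the present statement the constant $C$ is merely asserted to exist for each fixed $\beta$, the direct Hardy estimate already suffices and I would favor it for transparency; the compactness route is the fallback if a $\beta$-uniform bound is needed later. I would therefore carry out the steps in the order: (1) density reduction to $C_c^\infty(0,1)$; (2) the easy interior range via norm equivalence and classical Poincaré; (3) the endpoint cases via the representation $v(x)=-\int_x^1 v'$ and the weighted Hardy inequality with weight $\omega(x)\asymp\sinh(x)$ near the degenerate boundary point.
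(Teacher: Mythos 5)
Your proof is correct, and for the interior range $|\beta|<\frac{e+1}{e-1}$ it coincides with the paper's argument (norm equivalence and the classical Poincar\'e inequality); at the degenerate endpoints $\beta=\pm\frac{e+1}{e-1}$, however, you take a genuinely different route. You integrate the pointwise bound of Lemma~\ref{lem:bigob} against the weight: Cauchy--Schwarz gives $v(x)^2\le\bigl(\int_0^1\omega v_x^2\bigr)\int_x^1\omega^{-1}\le C\bigl(1+|\log x|\bigr)\int_0^1\omega v_x^2$, and since $\omega(x)\asymp x$ near its zero one has $\int_0^1\omega(x)\bigl(1+|\log x|\bigr)\dd x<\infty$, which yields~\eqref{poincare}; this Hardy-type argument in effect verifies by hand the Stredulinsky criterion that the paper merely cites. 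One point of care: you must use the version of the pointwise bound involving only the derivative energy $\int_0^1\omega v_x^2$ (which is exactly what the Cauchy--Schwarz step in Lemma~\ref{lem:bigob} produces), not the full norm $\|v\|_{E_\beta}^2$ as you loosely write, since otherwise the final integration gives a vacuous conclusion. The paper proceeds differently: it integrates by parts, $\int_0^1\cosh\,v^2=-2\int_0^1\sinh\,v\,v_x$ (the boundary terms vanishing by Lemma~\ref{lem:bigob}), applies Cauchy--Schwarz, and invokes the pointwise inequality $\cosh(x)\ge\frac{e^2+1}{e^2-1}\sinh(x)$ on $(0,1)$, arriving at the clean explicit constant $\frac{4(e^2-1)^2}{(e^2+1)^2}$ of~\eqref{poincas} --- sharper bookkeeping that fits the paper's later concern with best constants (Proposition~\ref{prop1}, Remark~\ref{rem:meco}). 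Your approach buys robustness, as it applies verbatim to any weight with a first-order zero; the paper's buys a better constant with no integrability computation. Finally, your density reduction to $C_c^\infty(0,1)$ is legitimate, since both sides of~\eqref{poincare} are continuous in the $E_\beta$-norm, and you are right that your compactness fallback is unnecessary: the constant in the statement is allowed to depend on~$\beta$.
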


\begin{proof}
The validity of such inequality is obvious for $|\beta|<\frac{e+1}{e-1}$. Indeed, in this case there exist two constants
$c_1$ and $c_2$ such that on the interval $(0,1)$ we have
$0<c_1\le \omega(x)\le c_2$ and the validity of~\eqref{poincare} is reduced to that of the classical Poincar\'e inequality without weight.

In the limit case $\beta=\frac{e+1}{e-1}$, we can observe that, from~\eqref{def:omegal},
the only zero of the function $\omega(x)=\frac{2e}{(e-1)^2}\sinh(x)$ in the closure of $(0,1)$ is of order~one.
Then the weight $\omega(x)$ satisfies the necessary and sufficient condition for the weighted Poincar\'e inequality to hold, see  \cite{Stre84}.
For reader's convenience we prove directly inequality~\eqref{poincare} exhibiting  an explicit constant.
Recall that $v(x)=O(|\log(x)|^{1/2})$ as $x\to0^+$.
In particular, for all $v\in E_{(e+1)/(e-1),0}$,  we have $\bigl[ \omega(x)v(x)^2\bigr]_{0+}^{1-}=0$.
Then integrating by parts and next using the Cauchy-Schwarz inequality we obtain, for $v\in E_{(e+1)/(e-1),0}$\,:
\[
\frac{1}{4}\Bigl(\int_0^1 \cosh v^2\Bigr)^2=\Bigl(\int_0^1 \sinh v v_x\Bigr)^2
\le  \Bigl(\int_0^1\sinh v^2\Bigr)\Bigl(\int_0^1\sinh v_x^2\Bigr).
\]
Next observe that
\[
\forall\, x\in(0,1), \qquad \cosh(x)\ge \frac{e^2+1}{e^2-1}\sinh(x).
\]
Combining these two estimates, we obtain
\begin{equation}
\label{poincas}
\int_0^1 \sinh(x) v(x)^2\dd x\le \frac{4(e^2-1)^2}{(e^2+1)^2}\int_0^1 \sinh(x) v_x(x)^2\dd x.
\end{equation}
Then~\eqref{poincare}~holds, {\it e.g.\/}, with $C=\frac{4(e^2-1)^2}{(e^2+1)^2}$.
In the other limit case  $\beta=-\frac{e+1}{e-1}$, $\omega(x)=\frac{2e}{(e-1)^2}\sinh(1-x)$.
Therefore, we can reduce to the previous case with a change of variables.
\end{proof}
The constant in ~\eqref{poincas} is far from being optimal. 
In fact, we will find the best constant in Remark~\ref{rem:meco},
 as a byproduct of the analysis performed Section~\ref{sec:fo}.

This being observed, let us go back to our minimization problem
\begin{equation}
\label{iab2}
I(\alpha,\beta)=
\inf\biggl\{ \int_0^1 \bigl( p+\beta p')\Bigl(\alpha u^2+ u_x^2\Bigr)\dd x \colon u\in H^1(0,1),\; u(0)=u(1)=1
\biggr\}.
\end{equation}

\begin{proposition}
\mbox{}
\label{prop1}
We have
\[
I(\alpha,\beta)>-\infty \iff 
\begin{cases} 
 -\frac{e+1}{e-1}\le \beta\le \frac{e+1}{e-1},\\ 
\alpha>-1/{C(\beta)},
\end{cases}
\]
where $C(\beta)>0$ is the \emph{best constant} in the  weighted Poincar\'e inequality~\eqref{poincare}.
Moreover, if $|\beta|< \frac{e+1}{e-1}$, then $I(\alpha,\beta)$ is in fact a minimum and there is only one minimizer
$u\in H^1(0,1)$ with $u(0)=u(1)=1$.
\end{proposition}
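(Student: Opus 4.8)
The plan is to read the whole statement through the spectral properties of the weighted Dirichlet quotient $\int_0^1\omega v_x^2 / \int_0^1\omega v^2$, whose infimum over admissible $v$ equals $1/C(\beta)$. I would first dispose of the constraint on $\beta$. If $|\beta| > \frac{e+1}{e-1}$, the computation preceding~\eqref{bbeta} shows that the weight $\omega = p + \beta p'$ changes sign, hence $\omega \leq -c < 0$ on some interval $[a,b] \subset (0,1)$. Feeding the functional a sequence $u_n = 1 + \phi_n$, with $\phi_n \in C_c^\infty(a,b)$, $\|\phi_n\|_\infty \leq 1$, and $\int_a^b (\phi_n)_x^2 \to \infty$ (a high-frequency bump), preserves the boundary values $u_n(0)=u_n(1)=1$ and keeps $\int\omega\,\alpha u_n^2$ bounded, while $\int\omega\,(u_n)_x^2 = \int_a^b \omega\,(\phi_n)_x^2 \leq -c\int_a^b(\phi_n)_x^2 \to -\infty$. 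Thus $I(\alpha,\beta) = -\infty$, proving that $I(\alpha,\beta) > -\infty$ forces $|\beta| \leq \frac{e+1}{e-1}$.

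Assuming now $|\beta| \leq \frac{e+1}{e-1}$, so that $\omega \geq 0$, I would substitute $u = 1+v$ with $v(0)=v(1)=0$ and set $A = \int_0^1\omega$, so that the functional becomes $\alpha A + 2\alpha\int_0^1\omega v + Q(v)$ with $Q(v) = \int_0^1\omega(\alpha v^2 + v_x^2)$, the quadratic form attached to~\eqref{poincare}. For $\alpha \geq 0$ the integrand is nonnegative and $I \geq 0$. For $\alpha < 0$, the best-constant form of~\eqref{poincare} gives $Q(v) \geq (1 - |\alpha|C(\beta))\int\omega v_x^2$. When $\alpha > -1/C(\beta)$ the coefficient $\delta = 1 - |\alpha|C(\beta)$ is positive, so $Q$ is coercive, and estimating the linear term by Cauchy--Schwarz with weight $\omega$ followed by~\eqref{poincare}, namely $|2\alpha\int\omega v| \leq 2|\alpha|A^{1/2}C(\beta)^{1/2}(\int\omega v_x^2)^{1/2}$, produces a lower bound that is a quadratic in $t = (\int\omega v_x^2)^{1/2}$ with positive leading term; hence $I(\alpha,\beta) > -\infty$. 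Conversely, if $\alpha < -1/C(\beta)$ the definition of the best constant yields $\phi \in C_c^\infty(0,1)$ with $Q(\phi) < 0$, whence $J(1+t\phi) = \alpha A + 2\alpha t\int\omega\phi + t^2 Q(\phi) \to -\infty$. At the threshold $\alpha = -1/C(\beta)$ I would use the first eigenfunction $v_1$ of the weighted problem, which can be chosen positive: then $Q(v_1) = 0$ while $\int\omega v_1 > 0$, so the surviving linear term in $J(1+tv_1) = \alpha A + 2\alpha t\int\omega v_1$ sends the functional to $-\infty$. This yields the strict inequality $\alpha > -1/C(\beta)$.

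For the last assertion, restrict to $|\beta| < \frac{e+1}{e-1}$, where $\omega$ is bounded and bounded away from $0$, so that $E_\beta = H^1(0,1)$ and $E_{\beta,0} = H^1_0(0,1)$ with equivalent norms. For $\alpha > -1/C(\beta)$ the bilinear form $a(v,w) = \int_0^1\omega(\alpha vw + v_x w_x)$ is continuous and coercive on $H^1_0(0,1)$, and the Euler--Lagrange equation of the problem reads $a(v,\phi) = -\alpha\int_0^1\omega\phi$ for all $\phi \in H^1_0(0,1)$; Lax--Milgram then provides a unique solution $v$, i.e.\ a unique minimizer $u = 1+v$. Equivalently, coercivity bounds every minimizing sequence in $H^1$, weak lower semicontinuity extracts a minimizer, and strict convexity of $Q$ gives uniqueness. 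I expect the real difficulty to lie in the degenerate endpoint cases $|\beta| = \frac{e+1}{e-1}$: there $\omega$ vanishes at one endpoint, $E_\beta$ strictly contains $H^1$, and one must replace $H^1_0$ by $E_{\beta,0}$, using density of $C_c^\infty$ and the boundary decay from Lemma~\ref{lem:bigob}; moreover the critical case $\alpha = -1/C(\beta)$ there depends on whether $C(\beta)$ is attained, precisely the point settled in the analysis of Section~\ref{sec:fo}.
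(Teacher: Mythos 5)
Three of your four implications are correct, and your overall strategy (substitute $u=1+v$, reduce to the quadratic functional in $v\in H^1_0(0,1)$, exploit the weighted Poincar\'e inequality) is the paper's. The sign-changing-weight argument for $|\beta|>\frac{e+1}{e-1}$ is the same as the paper's (it uses $\chi(x)\cos(nx)$ where you use a high-frequency bump). For sufficiency you replace the paper's Lax--Milgram argument on $E_{\beta,0}$ by a direct coercivity bound (Poincar\'e plus weighted Cauchy--Schwarz on the linear term); this is correct, slightly more elementary, and works also at $\beta=\pm\frac{e+1}{e-1}$ because it only uses the inequality \eqref{poincare}, not its attainment. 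The case $\alpha<-1/C(\beta)$ (density of $C_c^\infty$ in $E_{\beta,0}$ gives a smooth compactly supported $\phi$ with negative quadratic form) and the existence/uniqueness of the minimizer for $|\beta|<\frac{e+1}{e-1}$ also match the paper.

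The genuine gap is the threshold case $\alpha=-1/C(\beta)$ when $\beta=\pm\frac{e+1}{e-1}$. Your argument there requires a first eigenfunction $v_1$ with $Q(v_1)=0$ producing an \emph{admissible} competitor $1+tv_1$, i.e.\ $v_1\in H^1_0(0,1)$. Two problems arise. First, existence of an extremal for \eqref{poincare} in the degenerate case is not established: compactness of $E_{\beta,0}$ in the weighted $L^2$ space for $\omega\sim\sinh x$ is exactly what is in question, and deferring it to Section~\ref{sec:fo} is circular, since that section's identification of $\alpha_0=-1/C\bigl(\frac{e+1}{e-1}\bigr)$ and of the minimizer explicitly builds on Proposition~\ref{prop1} and its proof. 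Second, and more fatally, even granting existence, the extremal is (up to normalization) $v_1(x)=P_{\nu(\alpha_0)}(\cosh x)$, which satisfies $v_1(0)=P_{\nu(\alpha_0)}(1)=1\neq0$: it belongs to $E_{\beta,0}$ but \emph{not} to $H^1_0(0,1)$, so $u=1+tv_1$ violates the constraint $u(0)=1$ and is not admissible in \eqref{iab}. To close this case one must approximate $v_1$ (or a near-extremizing sequence, taken nonnegative) by $\phi_n\in C_c^\infty(0,1)$, accept that $Q(\phi_n)=\epsilon_n>0$ only, and then optimize the quadratic $t\mapsto \epsilon_n t^2+2\alpha t\int_0^1\omega\phi_n$ in $t$, whose minimum $-\alpha^2\bigl(\int_0^1\omega\phi_n\bigr)^2/\epsilon_n\to-\infty$. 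This is precisely the extremizing-sequence argument the paper uses, and it never needs $C(\beta)$ to be attained --- contrary to your closing remark, the conclusion at the threshold does not ``depend on whether $C(\beta)$ is attained''; only your particular method does. (For $|\beta|<\frac{e+1}{e-1}$ your eigenfunction argument is fine, since there the weight is bounded above and below and standard Sturm--Liouville theory gives a positive first eigenfunction in $H^1_0(0,1)$.)
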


\begin{proof}
Putting $u=v+1$ and observing that $\int_0^1\omega(x)\dd x=1$, we see that
\begin{equation}
\label{ij}
I(\alpha,\beta)=\alpha+\inf\bigl\{J(v)\colon v\in H^1_0(0,1)\bigr\},
\end{equation}
where
\begin{equation}
\label{JJ}
J(v)=\int_0^1 \omega(x)(\alpha v^2+v_x^2+2\alpha v)(x)\dd x.
\end{equation}

Assume that $I(\alpha,\beta)>-\infty$.
Then $|\beta|\le \frac{e+1}{e-1}$, otherwise we would get a contradiction by taking
a sequence of the form $\chi(x)\cos(nx)$ with $\chi$ smooth and such that 
$\supp(\chi)\subset\supp(\omega^-)\cap(0,1)$, where $\omega^-$ denotes the negative part of~$\omega$.
To prove the second inequality $\alpha>-1/C(\beta)$, we only have to treat the case $\alpha<0$.
Applying the inequality
\[
\int_0^1\omega(\alpha n^2v^2+n^2v_x^2+2\alpha nv)\ge I(\alpha,\beta)-\alpha
\]
valid for all $v\in H^1_0(0,1)$ and all $n\in\N$ and letting $n\to\infty$, we get
\[
\int_0^1 \omega v^2\le -\frac{1}{\alpha}\int_0^1 \omega v_x^2.
\]
Then we get $\alpha\ge -1/C(\beta)$. But in fact the inequality is strict, as otherwise we could take
a sequence $(v_n)$ such that $(-\alpha\int_0^1 \omega v_n^2)/(\int \omega v_{x,n}^2)\to1$ and
$\int \alpha\omega v_n\to-\infty$ to get a contradiction.

Conversely, assume  that  $|\beta|\le\frac{e+1}{e-1}$.
By the weighted Poincar\'e inequality~\eqref{poincare},
the map $v\mapsto\int_0^1 \omega v_x^2$ defines on $E_{\beta,0}$ an equivalent norm.
As $\alpha<-1/C(\beta)$, the symmetric bilinear form $B(v_1,v_2)=\int_0^1 \omega(\alpha v_1v_2+v_1'v_2')$ is coercive on the Hilbert space~$E_{\beta,0}$. 
Applying the Lax-Milgram theorem yields the existence and the uniqueness of a minimizer $\bar v\in E_{\beta,0}$ 
for the functional $J$.
But $H^1_0(0,1)\subset E_{\beta,0}$, so in particular, we get $I(\alpha,\beta)> -\infty$.
Moreover, if $|\beta|<\frac{e+1}{e-1}$, then recalling $E_{\beta,0}=H^1_0$ we see that $I(\alpha,\beta)$ is in fact a minimum, achieved at $\bar u=1+\bar v\in H^1$.
\end{proof}

The next proposition provides some useful information on~$I(\alpha,\beta)$. 
\begin{proposition}
\label{prop2}
The function $(\alpha,\beta)\mapsto I(\alpha,\beta)\in \R\cup\{-\infty\}$, defined for all $(\alpha,\beta)\in\R^2$,  is concave with respect to each one of its variables and  is even with respect to the variable~$\beta$.
Moreover,
\begin{equation}
\label{maxi}
\text{$\forall\, \alpha\in\R,\quad\forall\,|\beta|\le \textstyle\frac{e+1}{e-1}$},\qquad  
-\infty\le I(\alpha,\textstyle\frac{e+1}{e-1})\le I(\alpha,\beta)\le I(\alpha,0)\le \alpha.
\end{equation}
\end{proposition}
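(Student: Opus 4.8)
The plan is to read off everything from the variational definition~\eqref{iab2}. Fix an admissible test function $u$ (that is, $u\in H^1(0,1)$ with $u(0)=u(1)=1$) and expand
$\int_0^1 (p+\beta p')(\alpha u^2+u_x^2)\dd x=\alpha\int_0^1 p\,u^2+\alpha\beta\int_0^1 p'\,u^2+\int_0^1 p\,u_x^2+\beta\int_0^1 p'\,u_x^2$.
For fixed $\beta$ this is \emph{affine} in $\alpha$, and for fixed $\alpha$ it is \emph{affine} in $\beta$ (the only coupling is the bilinear cross term $\alpha\beta\int p'u^2$, so there is no joint concavity, but none is claimed). Since $I(\alpha,\beta)$ is the infimum over $u$ of this family, and an infimum of affine functions is concave as a $[-\infty,+\infty)$-valued function, separate concavity in each variable follows at once, with the concavity inequality holding in the extended reals.

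For the evenness in $\beta$ I would use the reflection $x\mapsto 1-x$. On $(0,1)$ one has $p(x)=\cosh(x-\frac12)/(2\sinh\frac12)$ and $p'(x)=\sinh(x-\frac12)/(2\sinh\frac12)$, so $p$ is symmetric and $p'$ antisymmetric about $x=\tfrac12$: $p(1-x)=p(x)$ and $p'(1-x)=-p'(x)$. Given an admissible $u$, the reflected function $\tilde u(x)=u(1-x)$ is again admissible, and the change of variables $y=1-x$ transforms the functional with parameter $\beta$ evaluated at $\tilde u$ into the functional with parameter $-\beta$ evaluated at $u$. As $u\mapsto\tilde u$ is a bijection of the admissible set, the two infima agree, giving $I(\alpha,\beta)=I(\alpha,-\beta)$.

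The chain~\eqref{maxi} then costs almost nothing. The map $\beta\mapsto I(\alpha,\beta)$ is concave and even, and a concave even function attains its maximum at the center of symmetry and is non-increasing in $|\beta|$. Concretely, concavity at the midpoint $0=\tfrac12\beta+\tfrac12(-\beta)$ together with evenness gives $I(\alpha,0)\ge\tfrac12 I(\alpha,\beta)+\tfrac12 I(\alpha,-\beta)=I(\alpha,\beta)$, which is the right-hand inequality $I(\alpha,\beta)\le I(\alpha,0)$; writing any $\beta$ with $|\beta|\le\frac{e+1}{e-1}$ as a convex combination of $\pm\frac{e+1}{e-1}$ yields, by the same mechanism, $I(\alpha,\beta)\ge I(\alpha,\frac{e+1}{e-1})$. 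Both inequalities remain valid even where $I=-\infty$, so the trivial bound $-\infty\le I(\alpha,\frac{e+1}{e-1})$ closes the left end.

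Finally the last inequality $I(\alpha,0)\le\alpha$ is obtained by inserting the constant competitor $u\equiv1$ into~\eqref{iab2} with $\beta=0$: here $u_x=0$ and $\int_0^1 p\dd x=1$ (the zeroth Fourier coefficient of the kernel of $(1-\partial_x^2)^{-1}$, equivalently $\int_0^1\omega=1$ as used in the proof of Proposition~\ref{prop1}), so the functional equals exactly $\alpha$ and the infimum is no larger. The only genuinely computational ingredient is the reflection identity for $p$ and $p'$; everything else is soft. I do not expect a real obstacle, the decisive observation being that concavity and evenness in $\beta$ alone already force the ordering in~\eqref{maxi}.
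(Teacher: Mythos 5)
Your proof is correct and follows essentially the same route as the paper: concavity as an infimum of functions affine in each variable, evenness via the reflection $x\mapsto 1-x$ using $p(1-x)=p(x)$ and $p'(1-x)=-p'(x)$, the chain \eqref{maxi} from concavity plus parity, and the constant competitor $u\equiv1$ for the final bound $I(\alpha,0)\le\alpha$. The only (cosmetic) difference is that your explicit convex-combination argument handles the extended-real values directly, whereas the paper additionally invokes Proposition~\ref{prop1} for the inequality $I\bigl(\alpha,\frac{e+1}{e-1}\bigr)\le I(\alpha,\beta)$.
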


\begin{figure}
\includegraphics[width=17cm,height=11cm]{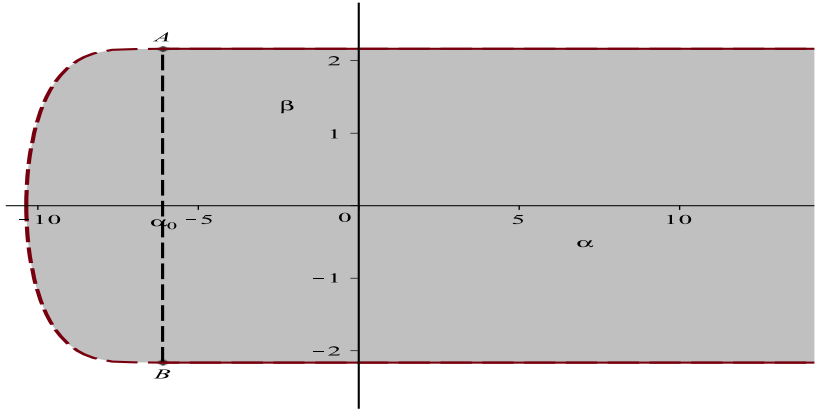}{\centering}
\vskip-5cm
\caption{The set of points $(\alpha,\beta)$ such that $I(\alpha,\beta)>-\infty$ (gray region).
The infimum is achieved in $H^1_0$ on its interior and in the larger space $E_{\beta,0}$ on the upper and lower boundaries $\beta=\pm\frac{e+1}{e-1}$.
The  abscissa $\alpha_0=-1/C(\mp\frac{e+1}{e-1})$ of the vertical dashed segment is the largest zero of a suitable Legendre function (see~Subsection~\ref{ssec:leg}).}
\label{ifini}
\end{figure}

\begin{proof}
The concavity property follows from the fact that $I(\alpha,\beta)$ is defined as an infimum of affine functions of the variables $\alpha$ and $\beta$. 
 To prove that $I(\alpha,\beta)=I(\alpha,-\beta)$, we can observe that 
 \[(p+\beta p')(x)=(p-\beta p')(1-x)\]
 and conclude making the change of variable $y=1-x$ inside the integral in~\eqref{iab2}.
 
To prove the last inequality in~\eqref{maxi}, consider the constant function $u\equiv1$ and 
observe that $I(\alpha,0)\le \alpha \int_0^1 (p+\beta p')(x)\dd x=\alpha$.
The other inequalities follow from the concavity and parity properties of the map $\beta\mapsto I(\alpha,\beta)$.
The result of Proposition~\ref{prop1} is also needed for the second inequality.

\end{proof}

Next lemma motivates the introduction of the quantity $I(\alpha,\beta)$ in relation with the rod equation.

\begin{lemma}
\label{lem:convo}
For any $\alpha,\beta\in\R$ and all $u\in H^1(\S)$ the following convolution estimate holds:
\begin{equation}
\label{convest}
\forall\,x\in\S,\qquad (p+\beta p')*\bigl(\alpha u^2+u_x^2)(x)\ge I(\alpha,\beta)\,u(x)^2
\end{equation}
and $I(\alpha,\beta)$ is the best possible constant.
\end{lemma}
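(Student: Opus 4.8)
The plan is to reduce the pointwise convolution estimate~\eqref{convest} to the constant $I(\alpha,\beta)$ defined by the minimization problem~\eqref{iab} through a scaling and translation argument that exploits the periodicity of the kernel. First I would fix $x\in\S$ and assume without loss of generality, by periodicity and a translation $y\mapsto y+x$, that $x=0$; by the $1$-periodicity of $p$ and $p'$ it suffices to treat the representative value. The key observation is that, for the $1$-periodic functions $p$ and $p'$, the convolution against a $1$-periodic function may be written as an integral over the fundamental domain $(0,1)$:
\[
(p+\beta p')*\bigl(\alpha u^2+u_x^2\bigr)(x)=\int_0^1 (p+\beta p')(x-y)\bigl(\alpha u(y)^2+u_y(y)^2\bigr)\dd y.
\]

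Next I would handle the trivial case $u(x)=0$ separately, where the inequality is immediate since the left-hand side is nonnegative whenever $\omega\ge0$, and reduces to $0\ge 0$. Assuming $u(x)\not=0$, I would introduce the normalized function $v(y)=u(x-y)/u(x)$ (or an analogous reflected/translated version chosen so that the convolution argument $x-y$ matches the variable in~\eqref{iab}), so that $v$ is a $1$-periodic $H^1$ function with $v(0)=1$, hence $v(0)=v(1)=1$ when restricted to $[0,1]$. Dividing through by $u(x)^2$ then transforms the desired inequality into
\[
\int_0^1 (p+\beta p')(y)\bigl(\alpha v(y)^2+v_y(y)^2\bigr)\dd y\ge I(\alpha,\beta),
\]
which holds by the very definition of $I(\alpha,\beta)$ as the infimum over all admissible competitors satisfying the boundary condition $v(0)=v(1)=1$. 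The main technical point to verify carefully is the change of variables: one must check that the reflection and periodization send the periodic $H^1(\S)$ function $u$ to a legitimate competitor in~\eqref{iab2}, respecting both the boundary values and the weight $\omega(y)=(p+\beta p')(y)$ in the integrand.

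For the sharpness claim, that $I(\alpha,\beta)$ is the \emph{best possible} constant, I would argue that any constant $c$ for which~\eqref{convest} holds for all $u\in H^1(\S)$ must satisfy $c\le I(\alpha,\beta)$. To see this, I would run the reduction in reverse: given a near-minimizing sequence $(v_n)$ for the problem~\eqref{iab} with $v_n(0)=v_n(1)=1$, I would build from each $v_n$ a periodic function $u_n\in H^1(\S)$ with $u_n(x)=1$ whose associated convolution integral at the point $x$ equals $\int_0^1(p+\beta p')(\alpha v_n^2+v_{n,x}^2)\dd x$, and then let $n\to\infty$ to force $c\le I(\alpha,\beta)$. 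The main obstacle I anticipate is the careful bookkeeping in this correspondence between periodic functions on $\S$ and functions on $(0,1)$ with prescribed endpoint values: one needs the reflection $y\mapsto x-y$ and the periodization to interact correctly with the parity structure of the kernel (the identity $(p+\beta p')(x)=(p-\beta p')(1-x)$ noted in the proof of Proposition~\ref{prop2}), so that no spurious sign on $\beta$ is introduced and the admissible class in~\eqref{iab} is matched exactly. Once this dictionary is set up cleanly, both the inequality and its optimality follow directly from the definition of the infimum.
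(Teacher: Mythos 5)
Your argument follows the paper's own proof in all essentials: reduce by translation invariance to a single evaluation point, observe that $y\mapsto u(x-y)/u(x)$ identifies periodic $H^1(\S)$ functions normalized at $x$ with the competitor class $\{v\in H^1(0,1)\colon v(0)=v(1)=1\}$ of \eqref{iab}, and run the identification backwards on a (near-)minimizing sequence to get sharpness. The only real difference is bookkeeping: the paper evaluates the convolution with the kernel reflected, obtaining the weight $p-\beta p'$ and then invoking the parity $I(\alpha,\beta)=I(\alpha,-\beta)$ of Proposition~\ref{prop2}, whereas your substitution $v(y)=u(x-y)/u(x)$ absorbs the reflection into the competitor, keeps the weight equal to $p+\beta p'$, and needs no parity identity at all; the ``spurious sign'' you worry about never arises in your setup.

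There is, however, one genuinely incorrect step: the case $u(x)=0$. You dismiss it by claiming the left-hand side of \eqref{convest} is nonnegative ``whenever $\omega\ge0$'', but the integrand is $\omega\cdot(\alpha u^2+u_x^2)$, and for $\alpha<0$ the second factor is not pointwise nonnegative, so positivity of the weight proves nothing; note that the lemma must cover $\alpha\in(-1/C(\beta),0)$, where $I(\alpha,\beta)$ is finite. The correct argument is: if $I(\alpha,\beta)>-\infty$, then $\int_0^1\omega(\alpha w^2+w_x^2)\dd x\ge0$ for every $w\in H^1_0(0,1)$ --- otherwise, testing \eqref{iab} with $u=1+nw$ gives, via \eqref{ij}--\eqref{JJ}, $J(nw)=n^2\int_0^1\omega(\alpha w^2+w_x^2)\dd x+2n\alpha\int_0^1\omega w\dd x\to-\infty$ as $n\to\infty$, contradicting finiteness of the infimum; equivalently, one can invoke the weighted Poincar\'e inequality \eqref{poincare} together with the bound $\alpha>-1/C(\beta)$ from Proposition~\ref{prop1}. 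Since a periodic $u$ with $u(x)=0$ transforms under your change of variables exactly into such a $w\in H^1_0(0,1)$, this closes the case (the left-hand side is $\ge 0=I(\alpha,\beta)\,u(x)^2$); and when $I(\alpha,\beta)=-\infty$ this case carries no content, per the Remark following the lemma. With this one repair your proof is complete.
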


\begin{remark}
This lemma is interesting when $I(\alpha,\beta)>-\infty$, but its statement is true also when
$I(\alpha,\beta)=-\infty$. In this case the sentence ``$-\infty$ is the best possible constant'' should be understood
as a negative result: the convolution estimate~\eqref{convest} breaks down whenever $(\alpha,\beta)$ does not fulfill
the conditions of Proposition~\ref{prop1}. 
\end{remark}

\begin{proof}
Let $\delta=\delta(\alpha,\beta)$ be some (possibly negative) constant.
Because of the invariance under translations,
we have that 
\begin{equation}
\label{bed}
(p+\beta p')*\bigl(\alpha u^2+u_x^2)\ge \delta\,u^2
\end{equation}
holds true for all $u\in H^1(\S)$ if and only if 
\[(p+\beta p')*\bigl(\alpha u^2+u_x^2)(1)\ge \delta\,u^2(1)\]
 holds true for all $u\in H^1(\S)$.
But on the interval~$(0,1)$,
$(p+\beta p')(1-x)=(p-\beta p')(x)$.
 Hence,
\[
(p+\beta p')*\bigl(\alpha u^2+u_x^2)(1)=\int_0^1 (p-\beta p')(x)(\alpha u^2+u_x^2)(x)\dd x.
\]
Normalizing to obtain $u(1)=1$ (and hence $u(0)=1$ by the periodicity) we get that the best constant 
$\delta$~in inequality~\eqref{bed} satisfies $\delta=I(\alpha,-\beta)=I(\alpha,\beta)$.
\end{proof}

\begin{proof}[Proof of Theorem~\ref{maintheo}.]
Let $s>3/2$. By the result of~\cite{ConStra00} we know that there exists  a unique solution~$u$ of the rod equation~\eqref{rod},  defined in some nontrivial interval $[0,T)$, and such that 
$u\in C([0,T),H^s(\S))\cap C^1([0,T),H^{s-1}(\S))$. Moreover, the map $u_0\mapsto u$ is continuous from $H^s(\S)$ to
$C([0,T),H^s(\S))\cap C^1([0,T),H^{s-1}(\S))$.
Owing to this well-posedness result, we can reduce to the case $s\ge3$. Indeed, if $u_0\in H^s(\S)$ with 
$3/2<s<3$, we can approximate $u_0$ in the $H^s(\S)$-norm using a sequence of data $(u_0)_n$ belonging to $H^3(\S)$ and satisfying condition~\eqref{assp}.
The relevant estimates, including that of~$T^*$, will pass to the limit as $n\to\infty$.

If, by contradiction, $T^*=+\infty$, then $T>0$ can be taken arbitrarily large.
As in~\cites{ConstAIF00, McKean98},
the starting point is the analysis of the flow map $q(t,x)$, defined by 
\begin{equation}
\label{flow}
\begin{cases}
q_t(t,x)=\gamma u(t,q(t,x)), &t\in(0,T),\; x\in\R,\\
q(0,x)=x.
\end{cases}
\end{equation}
For any $x\in\S$, the map $t\mapsto q(t,x)$ is well defined and continuously differentiable in the whole time interval $[0,T)$.
It is worth pointing out that for $\gamma=1$,~\eqref{flow} is the equation 
defining the geodesic curve of diffeomorphisms, issuing from the identity in the direction of 
$u_0$, cf. the discussion in~\cites{ConKol03, Kol07}. However, no such geometric 
interpretation is available for $\gamma \neq 1$.

From the rod equation
\begin{equation}
\label{rerod}
u_t+\gamma uu_x=-\partial_x p*\bigl(\textstyle\frac{3-\gamma}{2}\,u^2+\textstyle\frac{\gamma}{2}\,u_x^2\bigr),
\end{equation}
differentiating  with respect to the~$x$ variable and applying the identity $\partial^2_xp*f=p*f-f$,
we get
\begin{equation}
\label{eq:rodx}
\begin{split}
u_{tx}+\gamma uu_{xx}&=\frac{3-\gamma}{2}u^2-\frac{\gamma}{2}u_x^2-p*\rodnl\\
&=\frac{3}{2(\alpha+1)}\bigl[ \alpha u^2-u_x^2-p*(\alpha u^2+u_x^2)\bigr].
\end{split}
\end{equation}
Here we set
\begin{equation}
\label{def:alpha}
\alpha=\frac{3-\gamma}{\gamma}.
\end{equation}

Let us introduce the two~$C^1$-functions of the time variable, depending on~$\beta$,
\[ 
f(t)=(-u_x+\beta u)(t,q(t,x_0)), 
\qquad\text{and}
\qquad
g(t)=-(u_x+\beta u)(t,q(t,x_0)).
\]

Computing the time derivative using the definition of the flow~$q$, next using equations~\eqref{rerod}-\eqref{eq:rodx}, we get
\begin{equation}
\label{fpri}
\begin{split}
\frac{\dd f}{\dd t}(t)
&=\bigl[(-u_{tx}-\gamma u u_{xx})+\beta(u_t+\gamma uu_x)\bigr](t,q(t,x_0))\\
&=\frac{3}{2(\alpha+1)}\Bigl[-\alpha u^2+u_x^2+(p-\beta p')*(\alpha u^2+u_x^2)\Bigr](t,q(t,x_0)),
\end{split}
\end{equation}
and
\begin{equation}
\label{gpri}
\frac{\dd g}{\dd t}(t)= \frac{3}{2(\alpha+1)}\Bigl[-\alpha u^2+u_x^2+(p+\beta p')*(\alpha u^2+u_x^2)\Bigr](t,q(t,x_0)).
\end{equation}

Let us first consider the case $\gamma>0$. Then $\alpha>-1$.
From the definition of~$\beta_\gamma$~\eqref{def:beta} and the condition $\beta_\gamma<\infty$, we deduce that there exist $\beta\ge0$ such that
\begin{equation}
\label{betac}
\beta^2\ge \alpha-I(\alpha,\beta).
\end{equation}
Applying the convolution estimate~\eqref{convest} and recalling $I(\alpha,-\beta)=I(\alpha,\beta)$,
we get, for all $\beta\ge0$ satisfying~\eqref{betac},
\[
\begin{split}
\frac{\dd f}{\dd t}(t)&\ge \frac{3}{2(\alpha+1)}\Bigl[u_x^2-\bigl(\alpha-I(\alpha,-\beta)\bigr)u^2\Bigr](t,q(t,x_0))\\
&\ge \frac{3}{2(\alpha+1)}\Bigl[u_x^2-\beta^2 u^2\Bigr](t,q(t,x_0))\\
&=\frac{3}{2(\alpha+1)}f(t)g(t).
\end{split}
\]
In the same way,
\[
\begin{split}
\frac{\dd g}{\dd t}(t)&\ge \frac{3}{2(\alpha+1)}\Bigl[u_x^2-\bigl(\alpha-I(\alpha,\beta)\bigr)u^2\Bigr](t,q(t,x_0))\\
&\ge \frac{3}{2(\alpha+1)}\Bigl[u_x^2-\beta^2 u^2\Bigr](t,q(t,x_0))\\
&=\frac{3}{2(\alpha+1)}f(t)g(t).
\end{split}
\]

The assumption $u'_0(x_0)<-\beta_\gamma|u(x_0)|$ guarantees that we may choose
$\beta$ satisfying~\eqref{betac}, with $\beta-\beta_\gamma>0$ is small enough in a such way that
$u_0'(x_0)<-\beta |u_0(x_0)|$ For such a choice of~$\beta$ we have
\[ f(0)>0\quad \text{and}\quad g(0)>0.\]
The blowup of~$u$ will rely on the following basic property:

\begin{lemma}
\label{lemmadif}
Let $0<T^*\le\infty$ and  $f,g\in C^1([0,T^*),\R)$ be such that, for some constant~$c>0$ and 
all~$t\in [0,T^*)$,
\begin{equation}
\label{disyst}
\begin{split}
\displaystyle\frac{\dd f}{\dd t}(t)&\ge cf(t)g(t),\\ 
\displaystyle\frac{\dd g}{\dd t}(t)&\ge cf(t)g(t).
\end{split}
\end{equation}
If $f(0)>0$ and $g(0)>0$, then
\[
T^*\le \frac{1}{c\sqrt{f(0)g(0)}}<\infty.
\]
\end{lemma}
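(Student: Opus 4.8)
The plan is to first show that $f$ and $g$ remain \emph{positive} on the entire interval $[0,T^*)$, and then to reduce the two coupled differential inequalities to a single scalar Riccati-type inequality whose solution blows up by time $1/(c\sqrt{f(0)g(0)})$. Since both $f$ and $g$ start positive and each has nonnegative derivative \emph{as long as the product $fg$ is nonnegative}, the positivity should propagate forward in time. Concretely, I would argue by continuity: let $t_1=\sup\{t\in[0,T^*)\colon f>0 \text{ and } g>0 \text{ on }[0,t]\}$. On $[0,t_1)$ the right-hand sides $cf(t)g(t)$ are strictly positive, so both $f$ and $g$ are strictly increasing there; in particular $f(t)\ge f(0)>0$ and $g(t)\ge g(0)>0$ on $[0,t_1)$. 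By continuity these inequalities persist at $t_1$, which forces $t_1=T^*$ (otherwise positivity would extend past $t_1$, contradicting its definition). Hence $f,g>0$ throughout $[0,T^*)$.

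Once positivity is secured, the idea is to exploit the symmetry of the system. Adding the two inequalities in~\eqref{disyst} gives $\frac{\dd}{\dd t}(f+g)\ge 2cfg$. To close this, I would control $fg$ from below by $\bigl(\tfrac{f+g}{2}\bigr)^2$ via AM--GM, but a cleaner route is to track the product directly. Setting $h=fg$, the product rule and~\eqref{disyst} yield
\begin{equation}
\label{eq:prodineq}
\frac{\dd h}{\dd t}=\frac{\dd f}{\dd t}\,g+f\,\frac{\dd g}{\dd t}\ge cfg\,(f+g)=ch(f+g).
\end{equation}
Alternatively, and more efficiently, I would consider $\phi=\min(f,g)$ or simply observe that since $\frac{\dd f}{\dd t}\ge cfg$ and $\frac{\dd g}{\dd t}\ge cfg$ with both factors positive, one can compare with the decoupled autonomous system $\frac{\dd y}{\dd t}=cy^2$. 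The sharpest bound comes from noting $\frac{\dd}{\dd t}(f+g)\ge 2cfg$ together with $fg\ge$ (product controlled below): the quantity that blows up exactly at the stated time is obtained by analyzing $f+g$ against its own square.

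The cleanest way to extract the sharp constant is to show $f(t)g(t)$ dominates the solution of $w'=cw^2$ with $w(0)=f(0)g(0)$, which would give finite-time blowup but with the wrong constant; so instead I would use the substitution exploiting that $f$ and $g$ grow \emph{together}. Since $\frac{\dd}{\dd t}(f-g)=\frac{\dd f}{\dd t}-\frac{\dd g}{\dd t}$ need not have a sign, I would instead set $m(t)=\sqrt{f(t)g(t)}$; then by~\eqref{eq:prodineq} and AM--GM ($f+g\ge 2\sqrt{fg}=2m$) we obtain $\frac{\dd h}{\dd t}\ge 2cm\cdot h=2c\,h^{3/2}$, i.e. $\frac{\dd m}{\dd t}=\frac{1}{2\sqrt h}\frac{\dd h}{\dd t}\ge cm^2$. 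Integrating the scalar Riccati inequality $m'\ge cm^2$ from $0$ to $t$ gives $\frac{1}{m(0)}-\frac{1}{m(t)}\ge ct$, so $m(t)$ must escape to $+\infty$ before $t$ reaches $1/(c\,m(0))=1/\bigl(c\sqrt{f(0)g(0)}\bigr)$, forcing $T^*\le 1/\bigl(c\sqrt{f(0)g(0)}\bigr)$.

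I expect the main obstacle to be the passage to the sharp constant: a naive argument treating $f$ and $g$ separately yields finite-time blowup but with a suboptimal time bound, and the crux is the AM--GM step $f+g\ge 2\sqrt{fg}$ that converts the coupled system into the clean Riccati inequality $m'\ge cm^2$ for $m=\sqrt{fg}$. Verifying that $m$ is differentiable (which requires $fg>0$, guaranteed by the positivity step) and that the inequality integrates to give \emph{exactly} the constant $1/(c\sqrt{f(0)g(0)})$ is where the care lies; everything else is routine propagation-of-positivity and integration of a separable inequality.
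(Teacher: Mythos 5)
Your proposal is correct and follows essentially the same route as the paper's proof: propagate positivity of $f$ and $g$ forward by a continuity argument, then use AM--GM ($f+g\ge 2\sqrt{fg}$) to show that $m=\sqrt{fg}$ satisfies the Riccati inequality $m'\ge cm^2$, and integrate to obtain $T^*\le 1/\bigl(c\sqrt{f(0)g(0)}\bigr)$. The only cosmetic difference is that you differentiate the product $h=fg$ first and then pass to $m=\sqrt{h}$, whereas the paper differentiates $\sqrt{fg}$ directly; the computation is identical.
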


\begin{proof}
Let
\[
\tau=
\inf
\bigl\{ t\in[0,T^*)\colon f(t)=0\;\text{or}\;g(t)=0\bigr\}.
\]
The positivity of $f(0)$ and $g(0)$ implies that $\tau>0$. Observe that we cannot have $\tau<T^*$. 
Indeed, otherwise (exchanging if necessary $f$ with $g$) we would get $f(\tau)=0$ and $f,g\ge0$
on $[0,\tau]$. The differential inequality on~$f$ implies that $f$ is monotone increasing on $[0,\tau]$ leading to the contradiction $0=f(\tau)\ge f(0)>0$.
Hence, $f$ and $g$ are both monotone strictly increasing and positive in the whole interval $[0,T^*)$.

Now take $h(t)=\sqrt{f(t)g(t)}$. Using again assumption~\eqref{disyst}, next the arithmetic-geometric mean inequality
we find, on $[0,T^*)$,
\begin{align*}
\frac{\dd h}{\dd t}& \ge \frac{c}{2\sqrt{fg}} \,fg(f+g),\\
&\ge cfg=ch^2,
\end{align*}
with $h(0)=\sqrt{f(0)g(0)}$.
Substituting $v=1/f$, we find that 
$T^*\le\frac{1}{ c\sqrt{h(0)}}$.
\end{proof}

Estimate~\eqref{tmax} immediately follows in the case $\gamma>0$. When $\gamma<0$ the proof is the same, excepted for the fact that, as $\alpha<-1$, the inequalities must be reversed.

Let us establish the blowup rate~\eqref{burate}. More precisely, we prove that 
 \begin{equation*}
  \lim_{t\to (T^*)^-}(T^*-t)\,u_x\bigl(t,q(t,x_0)\bigr)=- \frac{2}{\gamma}.
  \end{equation*}
We will obtain such blowup rate adapting the arguments of~\cite{ConstAIF00}.
Namely, by Lemma~\ref{lemmadif} and the fact that, for some constant~$c>0$,  we have the estimate 
$\|u(t)\|_\infty\le c\|u_0\|_{H^1}$, we see that
$m(t)\equiv-\gamma u_x(t,q(t,x_0))\to+\infty$ as $t\to T^*$.
But $m$ satisfies the equation
\[
 \begin{split}
 \frac{\dd m}{\dd t}(t)
 &=\frac{3}{2(\alpha+1)}\biggl(-\alpha u^2+u_x^2-p*(\alpha u^2+u_x^2)\biggr)(t,q(t,x_0))\\
 &=\frac{3\,m(t)^2}{2(\alpha+1)} +R(t),
 \end{split}
\]
where $R(t)=\frac{3}{2(\alpha+1)}(-\alpha u^2-p*(\alpha u^2+u_x^2))(t,q(t,x_0))$. 
Using again $\|u(t)\|_\infty\le c\|u_0\|_{H^1}$ and observing that
$\|p*(\alpha u^2+u_x^2)\|_\infty \le c(\alpha)\|u_0\|_{H^1}^2$ by Young inequality, we get that 
$ |R(t)|$ is uniformly bounded on $(0,T^*)$ by a constant depending only on~$\alpha$ and $\|u_0\|_{H^1}$.

Let $\epsilon>0$. Taking  $0<t_0< T^*$ such that $t_0$ is close enough to $T^*$ in a such way that 
$-\epsilon\le R(t)/m(t)^2\le\epsilon$ on $(t_0,T^*)$, we deduce that, on such interval,
\[
 \frac{3}{2(\alpha+1)}-\epsilon\le \frac{\dd }{\dd t}\biggl(\frac{-1}{m(t)}\biggr) \le \frac{3}{2(\alpha+1)}+\epsilon.
\]
Now integrating these inequalities on $(t,T^*)$ we get the blowup rate~\eqref{burate} with $x(t)=q(t,x_0)$.
\end{proof}

\section{The minimization problem in the limit case~$\beta=\frac{e+1}{e-1}$ and in the case $\beta=1$}
\label{sec:fo}

\subsection{The limit case $\beta=\frac{e+1}{e-1}$}\mbox{}\\
\label{sec:limit}
We will start considering the limit case 
\[\beta=\frac{e+1}{e-1}.\]
In this case, according to formula~\eqref{32}, the weight function becomes
\begin{equation}
\label{agom}
\omega(x)=\frac{2e}{(e-1)^2}\sinh(x), \qquad x\in(0,1).
\end{equation}
Because of Proposition~\ref{prop2}, we are led to assume also
\begin{equation}
\label{asas}
\alpha>-1/C(\textstyle\frac{e+1}{e-1}).
\end{equation}
We start observing that 
\[
 I(\alpha,\textstyle\frac{e+1}{e-1})\ge\widetilde I(\alpha,\textstyle\frac{e+1}{e-1}),
 \]
where
\[
\begin{split}
\widetilde I(\alpha,\textstyle\frac{e+1}{e-1})
&\equiv
\inf\biggl\{ \int_0^1 \bigl( p+\beta p')\Bigl(\alpha u^2+ u_x^2\Bigr)\dd x \colon u\in E_{(e+1)/(e-1)}\;\text{and}\; u(1)=1
\biggr\}\\
&=\alpha+\inf\Bigl\{ J(v) \colon v\in E_{(e+1)/(e-1),0}
  \Bigr\}
\end{split}  
\]
and the functional $J$ is given by~\eqref{JJ}.
Indeed, the inequality $I(\alpha,\frac{e+1}{e-1})\ge\widetilde I(\alpha,\frac{e+1}{e-1})$ follows from the inclusion
$H^1_0(0,1)\subset E_{(e+1)/(e-1),0}$.

The unique minimizer~$v\in E_{(e+1)/(e-1),0}$ of the functional $J$ (whose existence was obtained in the proof of Proposition~\ref{prop1}) satisfies the Euler--Lagrange equation complemented with the right-boundary condition
\begin{equation}
\label{EL}
\begin{cases}
(\omega v_x)_x-\alpha\omega v=\alpha\omega & \text{in $x\in(0,1)$},\\
v(1)=0.
\end{cases}
\end{equation}
Problem~\eqref{EL} is undetermined, but we will see that among its solutions only one belongs to 
$E_{(e+1)/(e-1),0}$.

\subsection{The case $\beta=\frac{e+1}{e-1}$ and $\alpha>-1/C(\frac{e+1}{e-1})$}\mbox{}\\
\label{ssec:leg}
The Euler--Lagrange equation~\eqref{EL} reads
\begin{equation}
\label{ELa}
\sinh(x) v_{xx}(x) + \cosh(x) v_x(x) -\alpha\sinh v(x) = \alpha\sinh(x), \qquad \text{for $x\in(0,1)$.}
\end{equation}
To find the general solution, consider the change of unknown $v(x)=f(y(x))$, with $y=\cosh(x)$.
Then equation~\eqref{ELa} can be rewritten in the $y$ variable as
\begin{equation}
\label{ELf}
 (1-y^2)f_{yy}-2 y f_y(y)+\alpha f(y)=-\alpha, \qquad\text{with $y\in (1,\cosh1)$}.
\end{equation}
The constant function $y\mapsto -1$ is a 
particular solution. 
Substituting $\alpha=\nu(\nu+1)$, we recognize the usual form of a non-homogeneous second order Legendre ODE.
We thus set 
\begin{equation}
\label{def:nu}
\nu(\alpha)=-\frac{1}{2}+\frac{1}{2}\sqrt{1+4\alpha}\in\{z\in\C\colon \mathfrak{Im}(z)\ge0\},
\end{equation}
where the complex square root is taken in $\{z\in\C\colon \mathfrak{Im}(z)\ge0\}$.
The general solution of equation~\eqref{ELf} is thus
\begin{equation}
 \label{genso}
 f_{\lambda,\mu}=-1+ \lambda P_{\nu(\alpha)}(y)
 +\mu Q_{\nu(\alpha)}(y),
 \qquad y\in(1,\cosh 1),
\end{equation}
where $P_\nu$ and $Q_\nu$ are the two associate Legendre functions respectively of the first and of the second kind, of degree~$\nu$.
We recall that  $Q_\nu$ has a logarithmic singularity at $1^+$, whereas $P_\nu$ is bounded as $y\to1^+$.
Moreover, $P_\nu$ is a polynomial when $\nu$ is an integer.
Notice that the function $P_{\nu(\alpha)}(\cosh)$ does belong to $E_{(e+1)/(e-1)}$, but $Q_{\nu(\alpha)}(\cosh )$ does not,
because $Q_{\nu(\alpha)}(\cosh x)\not=O( \sqrt{|\log(x)|})$ as $x\to0^+$.

Hence, the only solution~$\bar v=v_{\lambda,\mu}$
 of~\eqref{ELa}, such that $\bar v\in E_{(e+1)/(e-1),0}$ is obtained taking $\mu=0$ and 
 $\lambda=1/ P_{\nu(\alpha)}(1)$.
 Thus,
 \begin{equation}
 \label{minia}
 \bar v(x)=-1 +\frac{P_{\nu(\alpha)}(\cosh x)}{P_{\nu(\alpha)}(\cosh 1)}, \qquad x\in(0,1).
 \end{equation}
 Expression~\eqref{minia} makes sense provided the denominator is nonzero.
 Therefore, we can recast condition~\eqref{asas} on~$\alpha$, guaranteeing that $I(\alpha,\frac{e+1}{e-1})>-\infty$,
 in the following equivalent form:
 \[
 \alpha>\alpha_0\simeq -6.113,
 \]
 where $\alpha_0$ is the largest zero of the function 
 $\alpha\mapsto{P_{\nu(\alpha)}(\cosh1)}$ (see Figure~\ref{alpha0}).
 The  representation of $P_\nu(z)$ as a hyper-geometric series, convergent for  $|\frac{1}{2}-\frac{z}{2}|<1$, is
 \[
 \begin{split}
 P_\nu(z)
 &={}_2F_{1}\Bigl(-\nu,\nu+1,\frac{1}{2}-\frac{z}{2}\Big)\\
 &=\displaystyle\sum_{k=0}^{+\infty}\frac{\Gamma(-\nu+k)\Gamma(\nu+1+k)}{(k!)^2\Gamma(-\nu)\Gamma(\nu+1)}
 \biggl(\frac{1}{2}-\frac{z}{2}\biggr)^{k}.
 \end{split}
 \]
Such representation holds true for $z=\cosh 1$, but computing $\alpha_0$ analytically seems to be difficult.
On the other hand,  $\alpha_0$ can be estimated via Newton's method.

 For $\alpha>\alpha_0=-1/C(\frac{e+1}{e-1})$, using that $\bar v$ solves~\eqref{EL}, $\bar v_x(0^+)=0$,
 and $\int_0^1\omega=1$, we have
 \begin{equation}
\label{qq1}
\begin{split}
 I(\alpha,\textstyle\frac{e+1}{e-1})
 &\ge\alpha+J(\bar v)
=\alpha+\int_0^1 \omega(\alpha \bar v^2+\bar v_x^2+2\alpha\bar v)\\
&=\alpha+\int_0^1 \bigl[(\omega \bar v_x)_x \bar v +\omega \bar v_x^2 +\alpha\omega \bar v\bigr]\\
&=\alpha+\bigl[\omega \bar v_x \bar v\bigr]_{0^+}^{1^-} + \alpha\int_0^1\omega \bar v\\
&=\int_0^1(\omega \bar v_x)_x= (\omega \bar v_x)(1^-)\\
&=\frac{(e+1)^2}{2e}\,\frac{P_{\nu(\alpha)}'}{P_{\nu(\alpha)}}(\cosh 1).
\end{split}
\end{equation}

\begin{figure}
\begin{minipage}[c]{.46\linewidth}
\includegraphics[width=10cm,height=11cm]{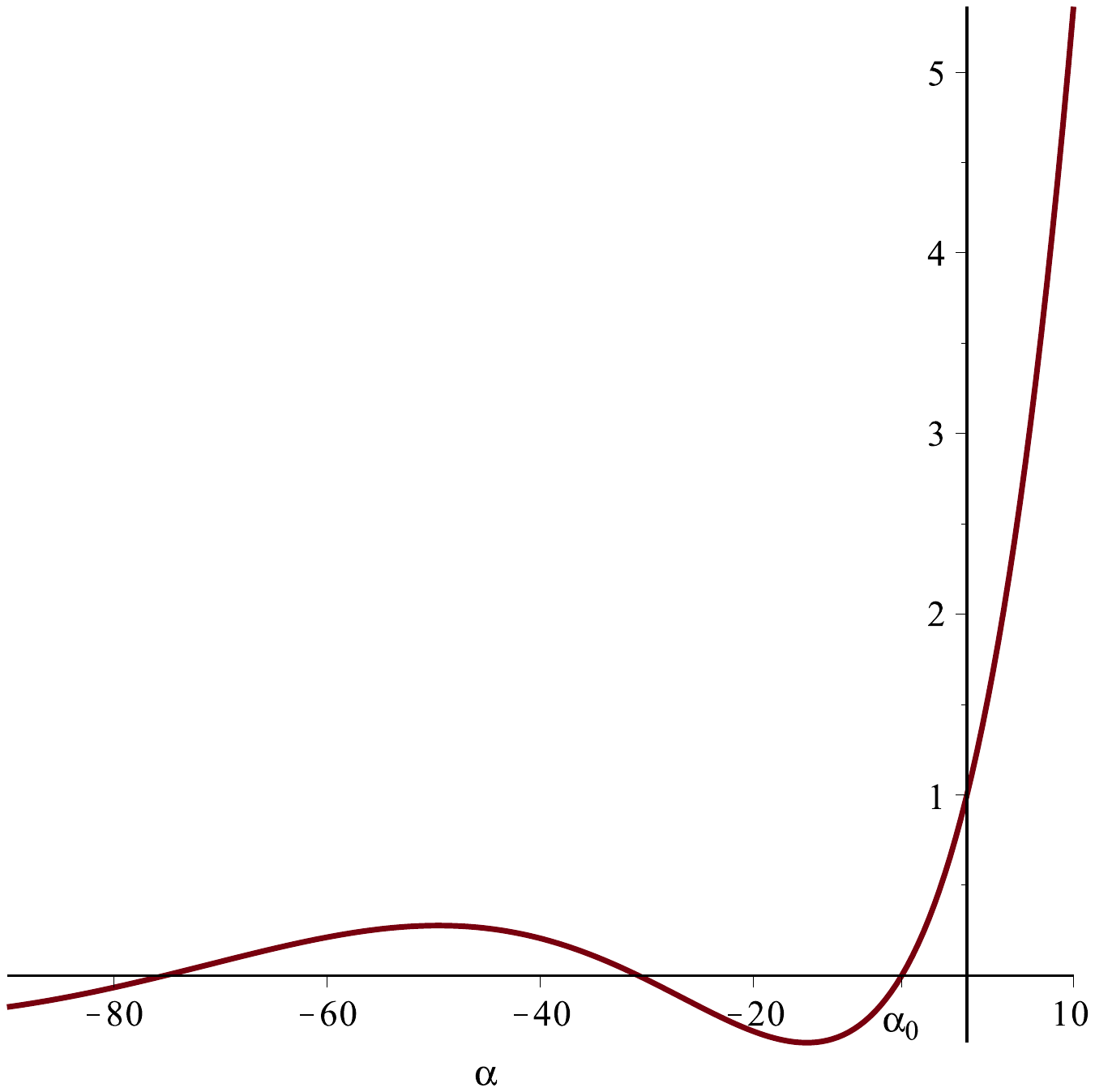}{\centering}
\vskip-5cm
\caption{The function $\alpha\mapsto P_{\nu(\alpha)}(\cosh1)$ and its largest zero~$\alpha_0$.}
\end{minipage}
\hfill
\label{minimizers-limi}
\begin{minipage}[c]{.46\linewidth}
\includegraphics[width=10.5cm,height=11cm]{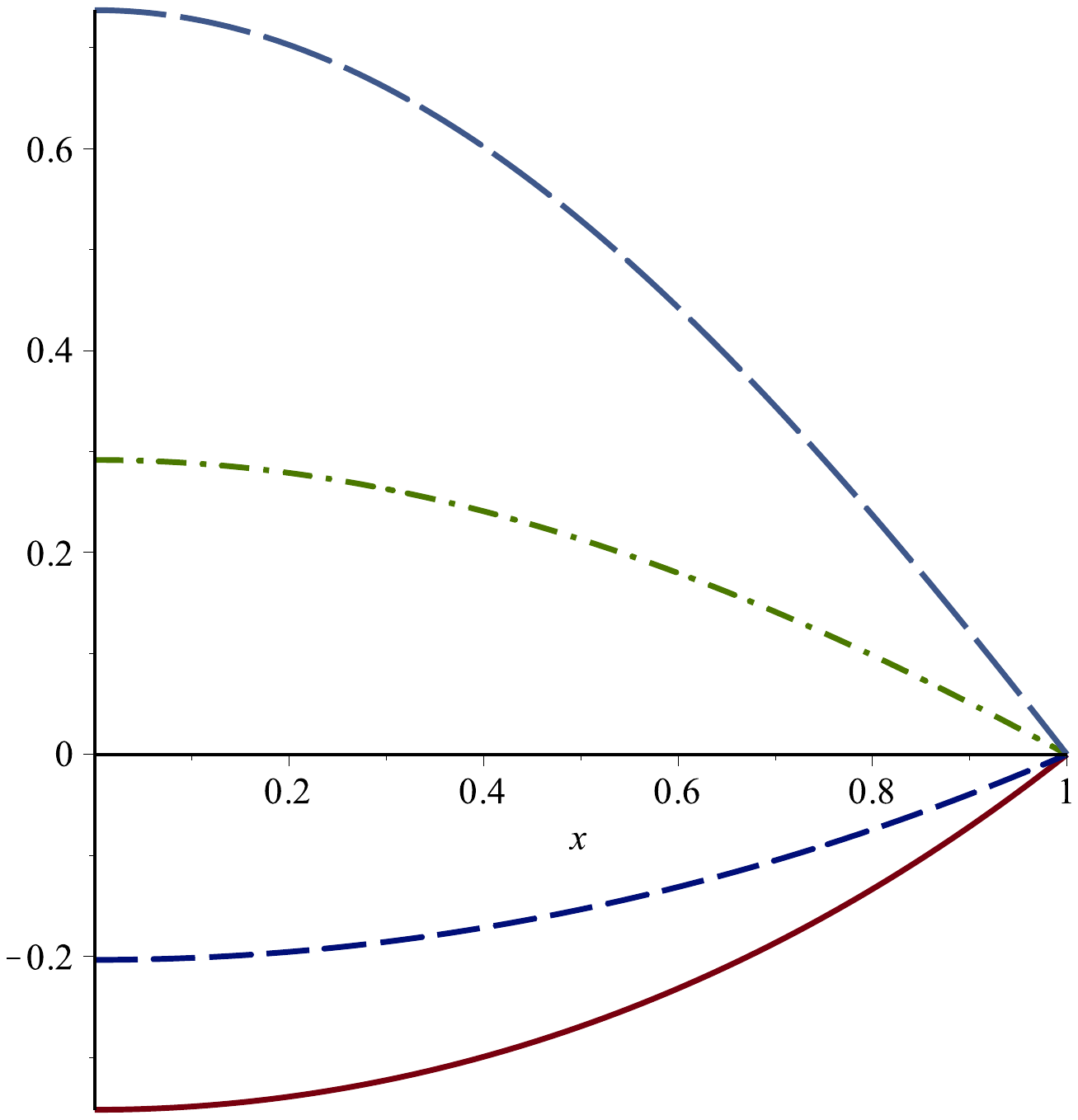}{\centering}
\vskip-4.5cm
\caption{A few minimizers as given by formula~\eqref{qq1}, for $\alpha=2$, $1$, $-1$, $-2$  (from bottom to top).}
\end{minipage}
\label{alpha0}
\end{figure}

\begin{remark}
\label{rem:meco}
Incidentally, we proved that $c=-(1/\alpha_0)\simeq0.164$ is the best constant in the weighted Poincar\'e inequality below (valid for all
$v\in C^1([0,1])$ such that $v(1)=0$,
\[
 \int_0^1 \sinh(x) v(x)^2\dd x\le c\int_0^1 \sinh(x)v_x(x)^2\dd x.
\]
\end{remark}

\subsection{The case $\beta=1$}\mbox{}\\
The computation of $I(\alpha,\beta)$ in the case $0\le \beta<\frac{e+1}{e-1}$ is different than that of the previous section, as standard variational methods apply in the usual Sobolev space $H^1_0(0,1)$.
Moreover, in the case $\beta=1$, the associated Euler--Lagrange boundary value problem
can be explicitly solved.
According to Proposition~\ref{prop1}, the computation below will be valid for $\alpha>-1/C(1)$.
As a byproduct of our calculations, we will find the explicit expression
\[
C(1)=4/(1+4\pi^2) 
\]
for the weighted Poincar\'e inequality.

Recall that
$
I(\alpha,\beta)=\alpha+\inf\{ J(v)\colon v\in H^1_0(0,1)\},
$
where $J(v)$  was introduced in~\eqref{JJ} and the weight function
\begin{equation}
\label{omexp}
  \omega(x)=p(x)+\beta p'(x)=\frac{(1+\beta)e^x+(1-\beta)e^{1-x}}{2(e-1)}, \qquad x\in(0,1)
\end{equation}
is positive and bounded away from~$0$ on the interval $(0,1)$.
The unique minimizer $\bar v\in H^1_0$ is the solution of
\begin{equation}
\label{ELa1}
 \begin{cases}
\omega v_{xx}+\omega_x v_x-\alpha\omega v=\alpha\omega &x\in(0,1)\\
v(0)=v(1)=0.
 \end{cases}
\end{equation}
For $\beta=1$, the weight $\omega$ reduces to $\omega(x)=e^x/(e-1)$.
Thus, the general solution of equation~\eqref{ELa1} for such choice of~$\omega$ (at least for $\alpha\not=-1/4$) is
\begin{equation}
 \label{geso1}
 v(x)=-1+\lambda e^{\nu(\alpha)x}+\mu e^{\overline{\nu(\alpha)}x}, \qquad \lambda,\mu\in\R,
\end{equation}
where the complex number $\nu(\alpha)$ is understood as in~\eqref{def:nu}.
To simplify further the result let us set
\begin{equation}
 \label{def:mu}
 \mu=\frac12\sqrt{1+4\alpha}\in \{z\in\C\colon\mathfrak{Im}(z)\ge0\}.
\end{equation}
Imposing the boundary conditions~$v(0)=v(1)=0$ we finally get the expression of minimizer of $I(\alpha,1)$:
\[
 \bar v(x)=-1+\frac{\sqrt e\sinh(\mu(\alpha)x)+\sinh(\mu(\alpha)(1-x))}{e^{x/2}\sinh(\mu(\alpha))}.
\]
The minimum $I(\alpha,1)$ is thus given by
\begin{equation}
\label{ia1}
\begin{split}
I(\alpha,1)
&=\alpha+J(\bar v)= \omega(1)\bar v_x(1)-\omega(0)\bar v_x(0)\\
&=-\frac12 + \mu\cdot\frac{\cosh\frac12\cosh\mu-1 }{\sinh\frac12\sinh\mu}.
\end{split}
\end{equation}
The above expression makes sense provided $\sinh(\mu(\alpha))\not=0$,
{\it i.e.\/} for $\alpha\not=-\frac14-k^2\pi^2$, with $k\in\Z$.
But $\alpha=-1/4$ is in fact a removable singularity in~\eqref{ia1}. The restriction to be imposed on~$\alpha$ is thus
$\alpha>-\frac14-\pi^2$.

\section{Proof of Theorem~\ref{theoquant}}
\label{sec:tq}

We are now in the position of proving Theorem~\ref{theoquant}.

\begin{proof}[Proof of Theorem~\ref{theoquant}]
Let us recall the definition of $\beta_\gamma$,
\begin{equation}
\label{def2:beta}
\beta_\gamma=\inf\Bigl\{ \beta\in\R^+\colon 
\beta^2+ I(\alpha,\beta)-\alpha\ge0\Bigr\},
\end{equation}
where the one-to-one relation between $\alpha$ and $\gamma$ is
\begin{equation}
\label{gal}
 \alpha=\frac{3-\gamma}{\gamma},\qquad\text{or}\qquad\gamma=\frac{3}{1+\alpha}.
\end{equation}
Using the results of the previous section we can now give explicit bounds from below for $I(\alpha,\beta)$ that can be used for the estimate of $\beta_\gamma$.

Using  the concavity properties of the function~$\beta\mapsto I(\alpha,\beta)$ we see that, for any fixed
 $\alpha>\alpha_0$, the infimum
$I(\alpha,\beta)$ is bounded from below by piecewise affine function
of the~$\beta$ variable. Namely,
\begin{equation}
\label{affi}
\begin{cases}
I(\alpha,\beta)\ge I(\alpha,1), &\text{if $0\le\beta\le1$}\\
I(\alpha,\beta)\ge \frac{e-1}{2}\Bigl(I(\alpha,\textstyle\frac{e+1}{e-1})-I(\alpha,1)\Bigr)\beta
   +\frac{e+1}{2}I(\alpha,1)-\frac{e-1}{2}I(\alpha,\frac{e+1}{e-1}),
&\text{if $1\le \beta\le \frac{e+1}{e-1}$}.
 \end{cases}
\end{equation}
We denote by $R(\alpha,\beta)$, the function defined by the right-hand in~\eqref{affi}, for $\alpha>\alpha_0$ and
$0\le\beta\le \frac{e+1}{e-1}$.
The condition $\alpha>\alpha_0$ ensures that $I(\alpha,1)\ge I(\alpha,\textstyle\frac{e+1}{e-1})>-\infty$, so that 
$R(\alpha,\beta)$ is finite.

The first issue is to find the condition on $\alpha$ guaranteeing $\beta_\gamma<+\infty$.
Owing to the lower bound~\eqref{affi}, a sufficient condition for this is that $\alpha$ is chosen in a such way that
\begin{equation}
\label{sc1}
\begin{split}
\textstyle
 \exists \, \beta\;\text{such that}\; 0\le\beta\le\frac{e+1}{e-1}\colon \quad\beta^2+ R(\alpha,\beta)-\alpha\ge0
\end{split}
\end{equation}
But condition~\eqref{sc1} is equivalent to the following one:
\begin{equation}
  1+I(\alpha,1)-\alpha\ge0\quad\text{or}\quad  
         \Bigl(\frac{e+1}{e-1}\Bigr)^2+I\bigl(\alpha,\textstyle\frac{e+1}{e-1}\bigr)-\alpha \ge0.
\end{equation}
Indeed, one implication is obvious and the converse one easily follows applying to the $\beta$-variable the elementary properties of quadratic polynomials.
Let us make more explicit the last condition: because of the definition of~$I(\alpha,\beta)$ and 
Proposition~\eqref{prop2}, the two functions 
$\alpha\mapsto I(\alpha,\frac{e+1}{e-1})$ and $\alpha\mapsto I(\alpha,1)$
are both increasing, concave and vanishing at $\alpha=0$.
Therefore,  there exist $\alpha_1^-<0<\alpha_1^+$ such that
\begin{equation}
 \label{def:alpha1}
 \Bigl(\frac{e+1}{e-1}\Bigr)^2+I\bigl(\alpha,\textstyle\frac{e+1}{e-1}\bigr)-\alpha \ge0 \;\iff\;
 \alpha_1^-\le\alpha\le\alpha_1^+.
\end{equation}
For the same reason,  there exist $\alpha_2^-<0<\alpha_2^+$ such that
\begin{equation}
 \label{def:alpha2}
 1+I(\alpha,1)-\alpha \ge0 \;\iff\;
 \alpha_2^-\le\alpha\le\alpha_2^+.
\end{equation}
The above zeros can be easily estimated via Newton's method. We find in this way
\[
\alpha_1^-<\alpha_2^-<0<\alpha_2^+<\alpha_1^+,
\]
see also Figures~\ref{minimum-limit}-\ref{minimum-limit1}.

\begin{figure}
\begin{minipage}[c]{.49\linewidth}
\includegraphics[width=10cm,height=11cm]{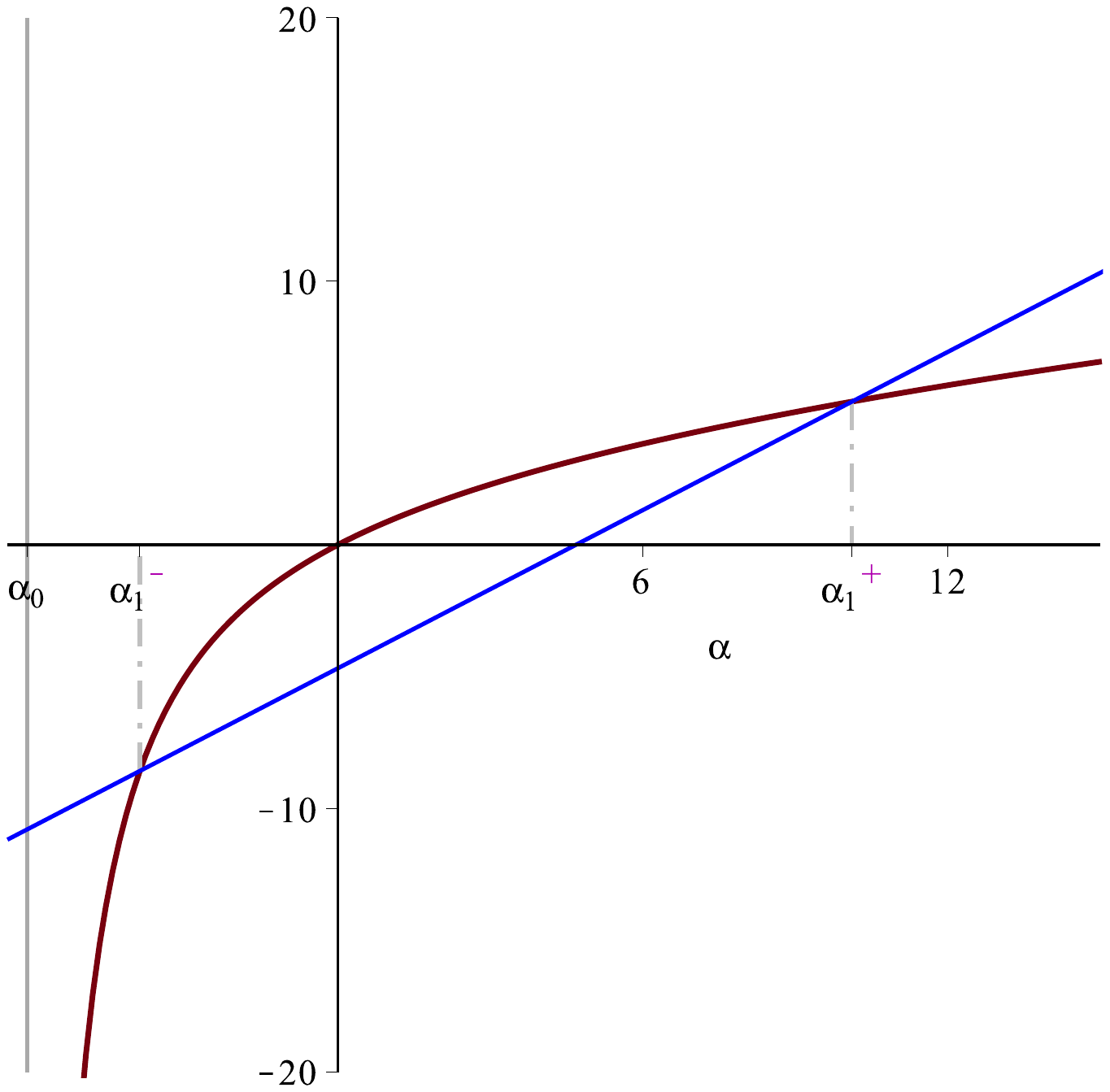}{\centering}
\vskip-4.5cm
\caption{The plot of $\alpha\mapsto I(\alpha,\frac{e+1}{e-1})$ (see equation~\eqref{qq1}) and of the straight line $\alpha\mapsto \alpha-\bigl(\frac{e+1}{e-1}\bigr)^2$, intersecting the curve at $\alpha_1^-$ and $\alpha_1^+$.}
\end{minipage}
\hfill
\label{minimum-limit1}
\begin{minipage}[c]{.49\linewidth}
\includegraphics[width=10.5cm,height=11cm]{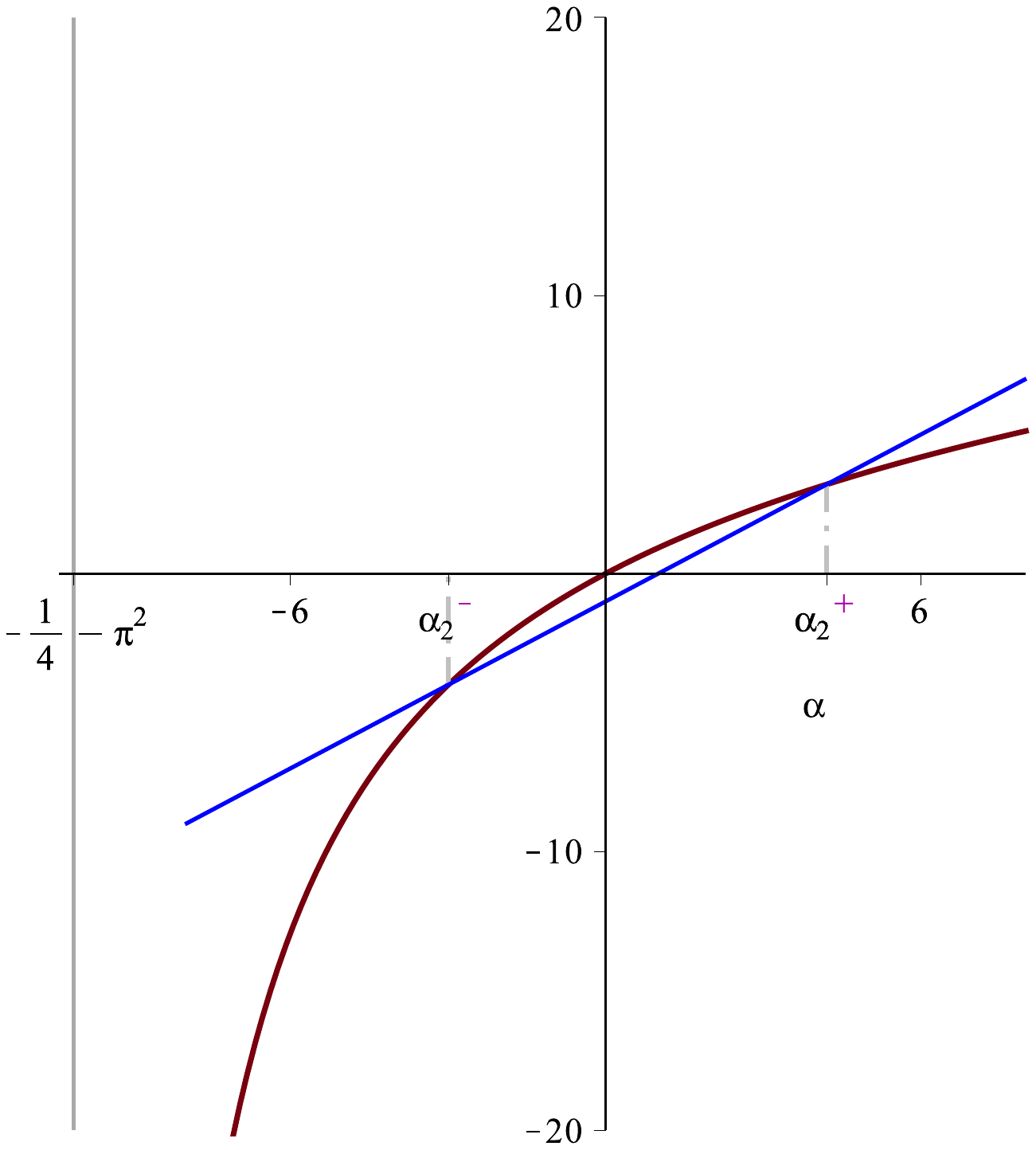}{\centering}
\vskip-4.5cm
\caption{Plot of $\alpha\mapsto I(\alpha,1)$ (see equation~\eqref{ia1}) and of the straight line $\alpha\mapsto \alpha-1$, intersecting the curve at $\alpha_2^-$ and $\alpha_2^+$.}
\end{minipage}
\label{minimum-limit}
\end{figure}

According to~\eqref{gal}, let us introduce the four constants
\begin{equation}
 \label{4ga}
\begin{matrix}
 \gamma_1^-=\frac{3}{1+\alpha_1^-}=-1.036\ldots \quad
 &\gamma_1^+=\frac{3}{1+\alpha_1^+}=0.269\ldots\\
 \gamma_2^-=\frac{3}{1+\alpha_2^-}=-1.508\dots\quad
& \gamma_2^+=\frac{3}{1+\alpha_2^+}=0.575\dots
\end{matrix}
\end{equation}

In particular, for $\alpha_1^-\le\alpha\le\alpha_1^+$, we find that the set in~\eqref{def2:beta} is nonempty: this means that in the range $\gamma\in (-\infty,\gamma_1^-]\cup[\gamma_1^+,+\infty)$ we get
 $\beta_\gamma<+\infty$, and Theorem~\ref{maintheo} and Corollary~\ref{cnge} apply.

On the other hand, under the more restrictive condition $\alpha_2^-\le\alpha\le\alpha_2^+$, 
we can make use
of inequality~\eqref{def:alpha2} to get the estimate
\begin{equation}
 \beta_\gamma\le \sqrt {\alpha-I(\alpha,1)} \le1.
\end{equation}
Now recalling the expression of~$I(\alpha,1)$ computed in~\eqref{ia1} we obtain the explicit estimate,
valid for
$\gamma\in (-\infty,\gamma_2^-]\cup[\gamma_2^+,+\infty)$:
\begin{equation}
 \label{bob1}
 \beta_\gamma\le\sqrt{\frac{3}{\gamma}-\frac12
  - \mu\cdot\frac{\cosh\frac12\cosh\mu-1 }{\sinh\frac12\sinh\mu}},
\end{equation}
where $\mu$ is the complex number
\[
 \mu=\frac12\sqrt{1+4(3-\gamma)/\gamma}.
\]
The choice between the two complex square roots of $1+4(3-\gamma)/\gamma$ does not affect the result
and the radical in~\eqref{bob1} (or in~\eqref{est:betb} below) is nonnegative because of the last inequality in~\eqref{maxi}.
Observe that taking here $\gamma=1$ gives the blowup criterion~\eqref{buch} for the Camassa--Holm equation.

If otherwise $\alpha\in[\alpha_1^-,\alpha_1^+]\backslash[\alpha_2^-,\alpha_2^+] $ then,
from the inequality~\eqref{affi} $I(\alpha,\beta)\ge R(\alpha,\beta)$, we obtain the bound 
$\beta_\gamma\le\tilde\beta_\gamma$, where $\tilde \beta_\gamma$ is the only  
zero of the quadratic polynomial $\beta\mapsto\beta^2+R(\alpha,\beta)-\alpha$ inside the interval 
$[1,\frac{e+1}{e-1}]$.
Thus, letting
\begin{equation}
\label{defin:b} 
\begin{split}
b&\equiv \frac{e-1}{2}\Bigl(I(\alpha,\textstyle\frac{e+1}{e-1})-I(\alpha,1)\Bigr)\\
  &=-\frac{e-1}{2}
 \biggl(
   \frac{(e+1)^2}{2e}\,\frac{P_{-1/2 +\mu}'}{P_{-1/2+\mu}}(\cosh 1) 
   +\frac12 - \mu\cdot\frac{\cosh\frac12\cosh\mu-1 }{\sinh\frac12\sinh\mu}
  \biggr).
\end{split}
\end{equation}
and
\begin{equation}
\label{defin:c} 
\begin{split}
c&\equiv \textstyle\frac{e+1}{2}I(\alpha,1)-\frac{e-1}{2}I(\alpha,\frac{e+1}{e-1})-\alpha\\
 &=\frac{e+1}{2}\biggl( -\frac12 + \mu\cdot\frac{\cosh\frac12\cosh\mu-1 }{\sinh\frac12\sinh\mu}\biggr)
  -\frac{e-1}{2}
  \biggl(
   \frac{(e+1)^2}{2e}\cdot\frac{P_{-1/2 +\mu}'}{P_{-1/2+\mu}}(\cosh 1)
  \biggr),
\end{split}
\end{equation}
we get the estimate, valid for $\gamma\in[\gamma_2^-,\gamma_1^-]\cup\gamma\in[\gamma_1^+,\gamma_2^+]$:
\begin{equation}
 \label{est:betb}
 \beta_\gamma\le -\frac{b}{2}+\frac12\sqrt{b^2-4c}.
\end{equation}

By our construction, for $\gamma=\gamma_2^-$ or $\gamma=\gamma_2^+$, the equality holds in~\eqref{bob1} and
in~\eqref{est:betb} and we have in this case $\beta_{\gamma_2^-}=\beta_{\gamma_2^+}=1$.

The limit $\beta_\infty=\lim_{\gamma\to\infty}\beta_\gamma$ is obtained taking $\alpha=-1$ in~\eqref{def2:beta}.
Going back to the estimate~\eqref{bob1}, letting $\gamma\to\infty$ we get
\begin{equation}
 \label{asbb}
 \beta_\infty\le 
 \sqrt{
 \frac{\sqrt3\bigl(1-\cosh\frac12\cos\frac{\sqrt3}{2}\bigr)}{2\sinh\frac12\sin\frac{\sqrt3}{2}}  -\frac12   }.
\end{equation}

The three claims of Theorem~\ref{theoquant} are now established.
\end{proof}

\section{The case of the Camassa--Holm equation}
\label{sec:ch}

In the case of the Camassa--Holm equation ($\alpha=2$),
 it is remarkable that the Euler--Lagrange equation 
\begin{equation}
 \label{el2}
 (\omega_xv_x)_x-2\omega v=2\omega, \qquad x\in(0,1),
\end{equation}
associated with the minimization
of $I(2,\beta)$ can be explicitly solved {\it for any\/} $\beta$.
 
Indeed, observing that from~\eqref{omexp} we have $\omega=\omega_{xx}$, on $(0,1)$, 
the associate homogeneous equation $(\omega v_{x})_x-2\omega v=0$ clearly possess
$v(x)=\omega_x(x)$ as a solution. We can compute a second independent  solution of this homogeneous equation of the form $v(x)=\omega_x f(x)$. Then $f$ must satisfy
$f_{xx}+(\frac{2\omega}{\omega_x}+\frac{\omega_x}{\omega})f_x=0$, provided $\omega\omega_x\not=0$.
Thus, $f_x(x)=\frac{1}{\omega\omega_x^2}$ in intervals where $\omega_x\not=0$.
Observe that
\[
\begin{split}
&\text{if $\beta>\frac{e-1}{e+1}$, then $\omega_x>0$ on the interval $(0,1)$}\\
&\text{if $0<\beta<\frac{e-1}{e+1}$, then $\omega_x\not=0$ for 
$x\in(0,1)$ and $x\not=x_\beta\equiv\frac{1}{2}\log(\textstyle\frac{(1-\beta)e}{1+\beta})$.}
\end{split}
\]
On the other hand, using again $\omega=\omega_{xx}$ and integration by parts in the indefinite integrals below we see that,
for $x\not=x_\beta$ and an arbitrary constant $C\in\R$
\begin{equation*}
\begin{split}
 \omega_x(x)\int\frac{1}{\omega\omega_x^2}
 &=-\omega_x(x)\int\frac{1}{\omega^2}\cdot\biggl(\frac{1}{\omega_x}\biggr)_x\\
 &=-\frac{1}{\omega(x)^2}+C\omega_x(x)-2\omega_x(x)\int\frac{1}{\omega^3}
 \end{split}
\end{equation*}
The expression on the right-hand side is well defined on the whole interval $(0,1)$, for all $0\le\beta<\frac{e-1}{e+1}$.
Therefore, the general solution of~\eqref{el2} is
\begin{equation}
 v(x)=-1+\omega_x(x)\biggl(\lambda+2\mu\int\frac{1}{\omega^3}\biggr)+\frac{\mu}{\omega(x)^2},
 \qquad x\in(0,1).
\end{equation}

We compute $\int\frac{1}{\omega^3}$ making the change of variables $y=e^x$:
\begin{equation*}
\begin{split}
\int\frac{1}{\omega(x)^3}\dd x=\frac{(e-1)^3}{(1+\beta)^3} \biggl[\int\frac{8y^2}{(y^2+B)^3}\dd y\biggr]_{y=e^x}, 
\end{split}
\end{equation*}
with 
\[
B=\frac{e(1-\beta)}{1+\beta}.
\]
The last  integral can be easily computed distinguishing  the cases $B>0$, $B=0$ and $B<0$.
For example, in the case $B>0$, that corresponds to $0<\beta<1$, we have
 \begin{equation*}
\begin{split}
\int\frac{1}{\omega(x)^3}\dd x
&= \frac{(e-1)^3}{(1+\beta)^3}\biggl(\frac{y(y^2-B)}{B(y^2+B)^2}+B^{-3/2}\arctan(t/\sqrt B)\biggr)\\
&=\frac{2\sinh(1/2)^2}{1-\beta^2}\cdot\frac{\omega_x(x)}{\omega(x)^2}
+\frac{8\sinh(1/2)^3}{(1-\beta^2)^{3/2}}\arctan\Bigl(\sqrt{\textstyle\frac{1+\beta}{e(1-\beta)}}\,e^x\Bigr).
\end{split}
\end{equation*}

The minimizer $\bar v$ of $I(\alpha,\beta)$, with $0\le\beta<\frac{e+1}{e-1}$ 
is obtained choosing in the above expression the coefficients $\bar\lambda$ and $\bar\mu$ solving the linear system
\begin{equation*}
\begin{cases}
 \omega_x(0)\bar\lambda+\frac{1}{\omega(0)}\bar\mu=1\\
 \omega_x(1)\bar\lambda + \biggl(2\omega_x(1)\Bigl(\frac{1}{\omega(1)^2}+\int_0^1\frac{1}{\omega^3}\Bigr)\biggr)\bar\mu=1.
\end{cases}
\end{equation*}
Once $\bar v$ is computed as indicated, the minimum is given, for $0\le\beta<\textstyle\frac{e+1}{e-1}$, by
\begin{equation}
\begin{split}
I(2,\beta)
&=2+J(\bar v)= \omega(1)\bar v_x(1)-\omega(0)\bar v_x(0)\\
&=\omega(1)^2\biggl(\bar\lambda+2\bar\mu\int_0^1\frac{1}{\omega^3}\biggr)-\bar\lambda\omega(0).
\end{split}
\end{equation}
The explicit expression  of $I(2,\beta)$ is thus easily written in  terms of elementary functions. We do not reproduce the complete formula here as it is too long to be really useful.
We just provide here a few particular values and its plot in the interval $[0,\frac{e+1}{e-1}]$
 (see Figure~\ref{i2beta}).
\begin{equation}
\label{values}
\begin{split}
I(2,0)&=1+\frac{\arctan(\sinh\frac12)}{\sinh\frac12+\arctan(\sinh\frac12)(\sinh\frac12)^2}=1.737\ldots\\
I(2,1)&=-\frac12+\frac32\cdot\frac{\cosh\frac12\cosh\frac32-1}{\sinh\frac12\sinh\frac32}= 1.734\ldots\\
I(2,\textstyle\frac{e+1}{e-1})&=\frac{(e+1)^2}{e^2+1}= 1.648\ldots
\end{split}
\end{equation}
The value of $I(2,0)$ was computed also in~\cite{Wah-NoDEA07} or \cite{Zhou-Calc-Var}.

\begin{figure}
\includegraphics[width=14cm,height=13cm]{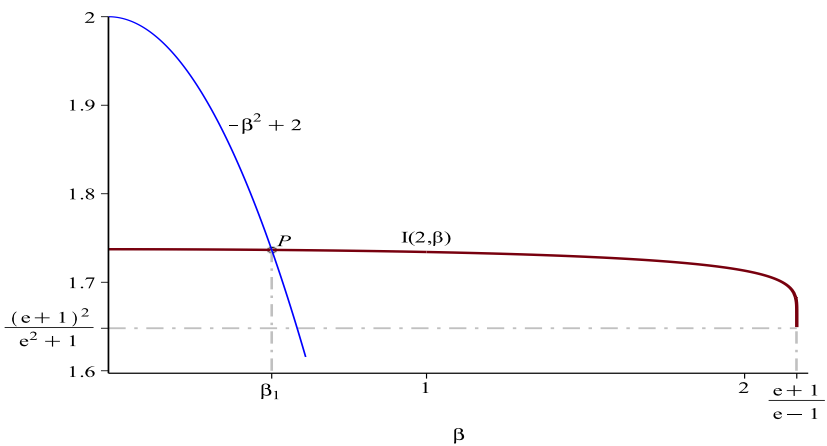}{\centering}
\vskip-6.5cm
\caption{The plot of $I(2,\beta)$ and the construction of $\beta_1=0.513\ldots$}
\label{i2beta}
\end{figure}

\begin{remark}
The last formula in~\eqref{values}, in fact, is obtained in a slight different way, as in our previous computations
we excluded the limit case $\beta=\frac{e+1}{e-1}$.
Of course, this formula agrees with that obtained from the more general one~\eqref{qq1} for $\alpha=2$, 
 as $\nu(2)=1$  and $P_1(y)=y$.
But we can easily prove it without relying on the more complicated approach used to get~\eqref{qq1}.
Indeed, when $\beta=\frac{e+1}{e-1}$, we can write the general solution
of~\eqref{el2} as 
\begin{equation}
v_{\lambda,\mu}(x)=-1+\lambda \cosh(x)+\mu \biggl(1+\frac{1}{2}\cosh(x)\log\biggl(\frac{\cosh(x)-1}{\cosh(x)+1}\biggr)\biggr),
 \qquad x\in(0,1).
\end{equation}
The appropriate boundary condition for $\beta=\frac{e+1}{e-1}$ is $v_{\lambda,\mu}\in E_{(e+1)/(e-1),0}$.
The only possibility to achieve this is to choose $\mu=0$ 
(recall that the elements of $E_{(e+1)/(e-1),0}$ are $O(\sqrt{|\log(x)|})$ as $x\to0+$).
The remaining condition $v(1)=0$ requires $\lambda=1/\cosh(1)$.
The minimizer of the functional $J$ in the case $\alpha=2$ and $\beta=\frac{e+1}{e-1}$, is then
\begin{equation}
\label{miniCH}
\bar v(x)=-1+\textstyle\frac{\cosh(x)}{\cosh(1)}.
\end{equation}
This indeed leads to the last formula in~\eqref{values}.
\end{remark}

In view of the application of Theorem~\ref{maintheo}.
it is interesting to make the numerical study of the zeros of the function
$\beta\mapsto \beta^2+ I(2,\beta)-2$, on the interval $[0,\frac{e+1}{e-1}]$.
The only zero is $\beta_1= 0.513\ldots$
According to Theorem~\ref{maintheo}, solutions of Camassa--Holm blowup 
provided $u_0'(x_0)<-\beta_1|u_0(x_0)|$.
On the other hand, Theorem~\ref{theoquant} predicted $\beta_1\le 0.515\ldots$ (see criterion~\eqref{buch}).
This is another confirmation that the estimate provided by Theorem~\ref{theoquant} is almost sharp.

To go one step further,  one might ask what is the {\it best constant\/} $\beta_1^*$ with the property that
if 
\[\inf_{x\in \S} \Bigl(u_0'(x)+\beta_1^*|u_0(x)|\Bigr)<0,\] 
then the solution of the Camassa--Holm equation arising from~$u_0$ blows up
in finite time.
We do not know how to exactly compute $\beta_1^*$ in the periodic case. However, the following estimates hold:
\[
0.462\dots=\textstyle\frac{e-1}{e+1}\le \beta_1^*\le \beta_1=0.513\ldots
\]
To establish the lower bound, we can make use of a property specific of the case $\gamma=1$. Namely, the fact that if the initial potential $y_0=u_0-u_{0,xx}$ has a constant sign then the corresponding solution exist globally. See \cites{ConstAIF00, Dan01}.
Let us take a sequence $y_0^n$ of smooth, periodic, non-negative functions converging in the distributional sense to a Dirac comb.
Then the  corresponding initial data $u_0^n=p*y_0^n$ give rise to global smooth periodic solutions.
But $\frac{u_{0,x}^n(0)}{u_0^n(0)}\to -\frac{\sinh(1/2)}{\cosh(1/2)}=-\frac{e-1}{e+1}$. Therefore, a condition of the form
$u_0'(x_0)<-c|u_0(x_0)|$, in general, does  not imply the blowup, unless $c\ge\frac{e-1}{e+1}$.

\section{Proof of Corollary~\ref{cnge} and the case of non-periodic solutions.}
\label{sec:cor}

\begin{proof}[Proof of Corollary~\ref{cnge}]
We assume that $\gamma\ge\gamma_1^+$, so in particular that $\gamma>0$.
 In the case $\gamma\le\gamma_1^-$ the proof is similar.
From the assumption of the corollary, $T^*=+\infty$. Then Theorems~\ref{maintheo}-\ref{theoquant} apply, 
implying that
 for all~$t\ge0$, and all~$x\in\R$ we have 
$u_x(t,x)\ge -\beta_\gamma|u(t,x)|$.
If $[a,b]$ is an interval where $u(t,x)\ge0$ for all $x\in[a,b]$, then we have 
\begin{align*}
 -\int_a^b\beta_\gamma e^{\beta_\gamma x}u(t,x)\dd x 
 &\le \int_a^be^{\beta_\gamma x}u_x(t,x)\dd x\\
 &=e^{\beta_\gamma b}u(t,b)-e^{\beta_\gamma a}u(t,a)-\beta_\gamma\int_a^be^{\beta_\gamma x}u(t,x)\dd x.
\end{align*}
This implies that
$e^{\beta_\gamma a}u(t,a)\le e^{\beta_\gamma b}u(t,b)$. 
 
If $u(t,\cdot)\le0$ on $[a,b]$, then applying again Theorem~\ref{maintheo} we have $u_x(t,x)\ge \beta_\gamma u(t,x)$.
This in turn implies
\begin{align*}
 \int_a^b \beta_\gamma e^{-\beta_\gamma x}  u(t,x)\dd x 
 &\le \int_a^b e^{-\beta_\gamma x}u_x(t,x)\dd x\\
 &=e^{-\beta_\gamma b}u(t,b)-e^{-\beta_\gamma a}u(t,a)+\beta_\gamma\int_a^be^{-\beta_\gamma x}u(t,x)\dd x.
\end{align*}
We then conclude that
$e^{-\beta_\gamma a}u(t,a)\le e^{-\beta_\gamma b}u(t,b)$.

Summarizing, we proved that $x\mapsto e^{\beta_\gamma x} u(x,t)$ is monotone increasing in any interval where
$u(\cdot, t)\ge0$ and $x\mapsto e^{-\beta_\gamma x} u(x,t)$ is monotone increasing in any interval where $u(\cdot,t)\le 0$. The condition $u(x_0,t_0)=0$ together with the periodicity of~$u$ imply that $u(\cdot,t_0)\equiv0$ and so $u\equiv0$ because of the conservation of the $H^1$-norm.
\end{proof}

As a consequence of this Corollary, we can deduce that if $u_0$ is not identically zero, but $\int_\S u_0=0$, then the corresponding solution must blow up in finite time.
Indeed, the zero-mean condition implies of course that $u_0$ must vanish in some point $x_0\in\S$.
We recover in this way a known conclusion for the Camassa--Holm equation (see \cite{ConEschCPAM}), extended
for the rod equation (at least for $\gamma$ outside a neighborhood of the origin) in~\cite{Hu-Yin10}.

\subsection*{A digression on non-periodic solutions}
The above proof applies also global solutions 
$u\in C([0,\infty),H^s(\R))\cap C([0,\infty),H^{s-1}(\R))$ of the rod equation~\eqref{rod}.
Notice that in this case the kernel~$p$ of
$(1-\partial_x^2)^{-1}$ is given by
\[
p(x)=\frac12 e^{-|x|}, \qquad x\in\R.
\]
Indeed, let us recall that by the result in~\cite{BraCMP}, if $\gamma\in[1,4]$ and $u_0\in H^s(\R)$, with $s>3/2$,
is such that
\begin{equation}
\label{crir}
 \exists\,x_0\in\R\quad\text{such that}\quad u_0'(x_0)
 < -\sqrt{-\frac12+\frac3\gamma-\frac{\sqrt{12-3\gamma}}{2\sqrt\gamma}\,}\,|u(x_0)|
\end{equation}
then  the unique local-in-time solution $u\in C([0,T],H^s(\R))\cap C^1([0,T],H^{s-1}(\R))$ must blow up
in finite time. In the Camassa--Holm case $\gamma=1$ the above coefficient equals $-1$
and is known to be optimal, see \cite{Dan01}.
Reproducing  the  proof of Corollary~\ref{cnge} in the case of non-periodic solution,
taking
\begin{equation}
\label{begar}
 \beta_\gamma=\sqrt{-\frac12+\frac3\gamma-\frac{\sqrt{12-3\gamma}}{2\sqrt\gamma}\,}
\end{equation}
we get the following result.

\begin{corollary}
 \label{cnge2}
 Let $\gamma\in[1,4]$ and $s>3/2$.
 Let  $u\in C([0,+\infty),H^s(\R))\cap C^1([0,+\infty),H^{s-1}(\R))$, be a global solution 
 of the rod equation~\eqref{rod} (with $p(x)=\frac12 e^{-|x|}$ and $x\in\R$).
 \begin{enumerate}
\item[i)] For all $t\ge0$,
either $u(t,x)>0$ for all $x\in\R$, or $u(t,x)<0$  for all $x\in\R$, or 
$\exists \,x_t\in\R$ such that $u(t,\cdot)\le 0$ in $(-\infty,x_t]$ and $u(t,x)\ge 0$ in $[x_t,+\infty)$.
 In the latter case, if $x\mapsto u(t,x)$ vanishes at two distinct points of the real line, then $x\mapsto u(t,x)$
 must vanish in the whole interval between them.
\item[ii)] If, at some time $t_0\ge0$, 
\[ \liminf_{x\to+\infty}e^{\beta_\gamma x}u(t_0,x)\le0
  \quad\text{and}\quad
  \limsup_{x\to-\infty}e^{-\beta_\gamma x}u_0(t_0,x)\ge0,\]
where $\beta_\gamma$ is given by equation~\eqref{begar},
then $u$ is the identically zero solution.
\end{enumerate}
In particular, if $0\not\equiv u_0\in H^s(\R)$ is such that 
$u_0(x)=o\bigl(e^{-\beta_\gamma|x|}\bigr)$
for $|x|\to\infty$, then the corresponding solution of the rod equation must blow up in finite time.
\end{corollary}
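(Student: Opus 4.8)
The plan is to carry the monotonicity mechanism from the proof of Corollary~\ref{cnge} over to the real line, and then to use the decay of $H^s(\R)$-profiles at infinity in place of periodicity. First I would observe that, since $u$ is assumed \emph{global}, the blowup criterion~\eqref{crir} (together with the well-posedness and uniqueness of~\cite{ConStra00}) can never be triggered: if one had $u_x(t_1,x_1)<-\beta_\gamma|u(t_1,x_1)|$ at some $(t_1,x_1)$, then taking $u(t_1,\cdot)$ as a new datum would force a finite-time singularity, a contradiction. Hence, with $\beta_\gamma$ as in~\eqref{begar},
\[
u_x(t,x)\ge -\beta_\gamma|u(t,x)|\qquad\text{for all }t\ge0,\ x\in\R,
\]
and exactly as in Corollary~\ref{cnge} this yields the two monotonicity facts: $x\mapsto e^{\beta_\gamma x}u(t,x)$ is nondecreasing on every interval where $u(t,\cdot)\ge0$, and $x\mapsto e^{-\beta_\gamma x}u(t,x)$ is nondecreasing on every interval where $u(t,\cdot)\le0$.

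For part~(i), the crucial point I would establish is that, at a fixed $t$, the profile $u(t,\cdot)$ \emph{cannot change sign from $+$ to $-$ as $x$ increases}. Indeed, if $u(t,a)>0$ and $u(t,b)<0$ with $a<b$, set $c=\inf\{x\ge a\colon u(t,x)\le0\}\in(a,b]$; then $u(t,\cdot)\ge0$ on $[a,c]$ with $u(t,c)=0$, so $e^{\beta_\gamma c}u(t,c)\ge e^{\beta_\gamma a}u(t,a)>0$, contradicting $u(t,c)=0$. Consequently every point where $u(t,\cdot)>0$ lies to the right of every point where $u(t,\cdot)<0$, which is precisely the claimed trichotomy (with $x_t$ any point separating the two sign sets). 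The statement about vanishing between two zeros follows by the same device: if $u(t,y_1)=u(t,y_2)=0$ with $y_1<y_2$, an interior negative point would contradict that $e^{-\beta_\gamma x}u$ is nondecreasing and equals $0$ at the left end of its negativity interval, while an interior positive point would contradict that $e^{\beta_\gamma x}u$ is nondecreasing and equals $0$ at the right end of its positivity interval.

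For part~(ii), I would combine the trichotomy with the two boundary hypotheses at the time $t_0$. In the generic case where $u(t_0,\cdot)\le0$ on $(-\infty,x_{t_0}]$ and $\ge0$ on $[x_{t_0},\infty)$, the function $h(x)=e^{\beta_\gamma x}u(t_0,x)$ is nondecreasing and nonnegative on $[x_{t_0},\infty)$, hence admits a limit $L=\liminf_{x\to+\infty}h(x)\ge0$; the hypothesis $L\le0$ forces $L=0$, whence $h\equiv0$ and $u(t_0,\cdot)\equiv0$ on $[x_{t_0},\infty)$. Symmetrically, $k(x)=e^{-\beta_\gamma x}u(t_0,x)$ is nondecreasing and nonpositive on $(-\infty,x_{t_0}]$, its limit as $x\to-\infty$ equals $\limsup_{x\to-\infty}k(x)\le0$, and the second hypothesis forces this to be $0$, so $u(t_0,\cdot)\equiv0$ on $(-\infty,x_{t_0}]$ as well. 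The purely one-signed cases are excluded outright, since, for example, $u(t_0,\cdot)>0$ everywhere would give $\liminf_{x\to+\infty}e^{\beta_\gamma x}u(t_0,x)>0$. Thus $u(t_0,\cdot)\equiv0$, and since the energy $E(u)=\int_\R(u^2+u_x^2)\dd x$ is conserved and vanishes at $t_0$, we conclude $u\equiv0$ for all time.

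Finally, the ``in particular'' assertion follows by contradiction: were the solution global, the decay $u_0(x)=o(e^{-\beta_\gamma|x|})$ would give $e^{\beta_\gamma x}u_0(x)\to0$ as $x\to+\infty$ and $e^{-\beta_\gamma x}u_0(x)\to0$ as $x\to-\infty$, so the hypotheses of~(ii) hold at $t_0=0$ and force $u_0\equiv0$, against assumption; hence finite-time blowup. I expect the main obstacle to lie in part~(i): one must argue carefully with continuity and maximal sign-intervals to rule out the $+\to-$ transitions and to locate the separating point $x_t$, and one must first verify that the monotonicity facts of Corollary~\ref{cnge} transfer verbatim to $\R$ with the kernel $p(x)=\frac12 e^{-|x|}$ (they do, since the flow-map computation underlying~\eqref{fpri}--\eqref{gpri} is insensitive to periodicity). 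Once~(i) is in place, part~(ii) and the decay corollary are short.
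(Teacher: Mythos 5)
Your proposal is correct and follows essentially the same route as the paper: globality plus the local-in-space criterion~\eqref{crir} yields $u_x(t,x)\ge-\beta_\gamma|u(t,x)|$ everywhere, whence the monotonicity of $x\mapsto e^{\beta_\gamma x}u(t,x)$ (resp.\ $e^{-\beta_\gamma x}u(t,x)$) on intervals where $u\ge0$ (resp.\ $u\le0$), from which all claims follow. The paper states this mechanism and leaves the deductions of (i), (ii) and the decay consequence to the reader; your write-up simply supplies those details (sign-set separation, limits of the monotone weighted profiles, conservation of the $H^1$-norm) in the way the authors intended.
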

Indeed, all these claims follow from the fact that if $u$ is a global solution than
$x\mapsto e^{\beta_\gamma x} u(x,t)$ is monotone increasing in any interval where
$u(\cdot, t)\ge0$ and $x\mapsto e^{-\beta_\gamma x} u(x,t)$ is monotone increasing in any interval where $u(\cdot,t)\le 0$, as seen before.

\begin{remark}
In the last conclusion of the corollary we recover the known fact 
that solutions of the Camassa--Holm equation
arising from compactly supported data (see \cite{HMPZ07}), or more in general  (see~\cite{Bra11}) from data decaying 
faster than peakons ---solitons with profile $ce^{-|x|}$---
feature a wave breaking phenomenon after some time. The proofs given in~\cites{Bra11,HMPZ07}
relied on McKean's necessary and sufficient condition  \cite{McKean04} for wave breaking and were not suitable for the generalization to $\gamma\not=1$.
Moreover, the sign condition on global solutions~$u$ contained in the first item of our corollary is in the same spirit as the sign condition on the associated potential $u-u_{xx}$ provided by McKean's theorem.
\end{remark}

The method that we used in our previous paper~\cite{BraCMP} in the non-periodic case  looks  simpler than that 
of the present paper. Indeed, in ~\cite{BraCMP} we relied on elementary estimates for bounding from below the convolution 
term $p*(\alpha u^2+u_x^2)$ without making use of variational methods.
On the other hand, the result obtained therein is much weaker as the range of applicability for the 
parameter~$\gamma$ is considerably narrower.
We point out that applying the variational method of the present paper in the non-periodic case we can recover, but not improve,
the results in~\cite{BraCMP}.
Indeed, the main issue is the study of the minimization problem (the analogue of~\eqref{iab} but now
with $p(x)=\frac12e^{-|x|}$):
\begin{equation}
\label{iabr}
I_\R(\alpha,\beta)=
\inf\biggl\{ \int_\R \bigl(p+\beta p_x)\Bigl(\alpha u^2+ u_x^2\Bigr)\dd x \colon u\in H^1(\R),
\; u(0)=1\biggr\}.
\end{equation}
The analogue of Proposition~\eqref{prop1} and Proposition~\ref{prop2} in our present setting
is provided by the following one:
\begin{proposition}
We have
 \label{prop33}
 \begin{equation}
I_\R(\alpha,\beta)>-\infty \iff \alpha\ge-\frac{1}{4} \quad\text{and}\quad -1\le\beta\le1.
\end{equation}
Moreover, the function $\beta\mapsto I_\R(\alpha,\beta)$ is constant on the interval $[-1,1]$ and under
the above restrictions on~$\alpha$ and $\beta$ we have
\begin{equation}
\label{min-bet}
 I_\R(\alpha,\beta)
 =-\frac12+\frac12\sqrt{1+4\alpha}.
\end{equation}
\end{proposition}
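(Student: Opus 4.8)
The plan is to exploit the additive structure of the weight $\omega(x)=p(x)+\beta p'(x)=\tfrac12 e^{-|x|}\bigl(1-\beta\sign(x)\bigr)$, which on the whole line decouples the functional into two independent half-line problems. For $-1\le\beta\le1$ (exactly the range making $\omega\ge0$) one has
\[
\int_\R\omega\,(\alpha u^2+u_x^2)\dd x
=\frac{1-\beta}{2}\int_0^\infty e^{-x}(\alpha u^2+u_x^2)\dd x
+\frac{1+\beta}{2}\int_{-\infty}^0 e^{x}(\alpha u^2+u_x^2)\dd x .
\]
Since an $H^1(\R)$ function is free to be chosen independently on $(0,\infty)$ and $(-\infty,0)$ subject only to the matching value $u(0)=1$, the two integrals may be minimized separately; writing $A_\pm$ for the resulting half-line infima under $u(0)=1$, the reflection $x\mapsto-x$ gives $A_+=A_-=:A$, whence $I_\R(\alpha,\beta)=\tfrac{1-\beta}{2}A+\tfrac{1+\beta}{2}A=A$. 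This is manifestly independent of $\beta$ on $[-1,1]$ and reduces everything to the single half-line infimum $A=\inf\bigl\{\int_0^\infty e^{-x}(\alpha u^2+u_x^2)\dd x:u\in H^1(0,\infty),\ u(0)=1\bigr\}$.

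To compute $A$ I would complete the square. For real $\lambda$ the identity $\alpha u^2+u_x^2=(\alpha-\lambda^2)u^2+\lambda(u^2)_x+(u_x-\lambda u)^2$, integrated against $e^{-x}$ with the middle term integrated by parts (contributing $-\lambda u(0)^2=-\lambda$ from the boundary at $0$ and shifting the $u^2$-coefficient to $\alpha-\lambda^2+\lambda$), shows that choosing $\lambda$ with $\lambda^2-\lambda-\alpha=0$ leaves $A\ge -\lambda+\int_0^\infty e^{-x}(u_x-\lambda u)^2\dd x\ge-\lambda$; this requires $1+4\alpha\ge0$, and the smaller root $\lambda_-=\tfrac12(1-\sqrt{1+4\alpha})$ gives the sharp bound $A\ge-\lambda_-=-\tfrac12+\tfrac12\sqrt{1+4\alpha}$. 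For the reverse inequality, when $\alpha>0$ one has $\lambda_-<0$, so $u=e^{\lambda_-x}$ is admissible, annihilates the residual, and attains the infimum. When $-1/4\le\alpha\le0$ the function $e^{\lambda_-x}$ no longer decays and the infimum is not attained; here the cleanest route is to set $w=u_x-\lambda_-u$, note that $u=e^{\lambda_-x}\bigl(1+\int_0^x e^{-\lambda_- t}w\dd t\bigr)$ and that admissibility forces the single affine constraint $\langle w,g\rangle=-1$ in $L^2(e^{-x}\dd x)$ with $g=e^{(1-\lambda_-)x}$; because $\lambda_-\le\tfrac12$ we have $g\notin L^2(e^{-x}\dd x)$, so the minimal value of $\|w\|^2$ under this constraint is $1/\|g\|^2=0$, whence $A=-\lambda_-$. (For $\alpha\in(-1/4,0)$ one may instead use explicit competitors equal to $e^{\lambda_-x}$ on $[0,R]$ glued to a decaying tail, for which the residual is $O(e^{-\sqrt{1+4\alpha}\,R})$.)

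It remains to prove necessity. If $|\beta|>1$ then $\omega$ is strictly negative on a half-line, and test functions oscillating with high frequency there, normalized so that $u(0)=1$, make $\int\omega\,u_x^2\to-\infty$, so $I_\R=-\infty$. If $|\beta|\le1$ but $\alpha<-1/4$, I would work with the homogeneous form $Q_0(v)=\int_0^\infty e^{-x}(\alpha v^2+v_x^2)\dd x$ over $v\in H^1(0,\infty)$ with $v(0)=0$: the substitution $v=e^{x/2}\psi$ (so $\psi(0)=0$) yields, after an integration by parts,
\[
Q_0(v)=\int_0^\infty \psi_x^2\dd x+\Bigl(\alpha+\tfrac14\Bigr)\int_0^\infty\psi^2\dd x .
\]
Since slowly varying $\psi$ make the Dirichlet quotient $\int\psi_x^2/\int\psi^2$ arbitrarily small, $Q_0$ takes a strictly negative value precisely when $\alpha<-1/4$; then replacing a fixed function $u_0$ with $u_0(0)=1$ by $u_0+tv$ and letting $t\to\infty$ drives $A$, hence $I_\R$, to $-\infty$. (The same identity shows $Q_0\ge0$ when $\alpha\ge-1/4$, re-confirming boundedness.)

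The main obstacle is the sharp upper bound in the non-attained regime $-1/4\le\alpha\le0$ together with the identification of $-1/4$ as the exact threshold. Both hinge on the borderline fact that $e^{x/2}$ (equivalently the functional-representing vector $g=e^{(1-\lambda_-)x}$) just fails to belong to $L^2(e^{-x}\dd x)$ — this is exactly what forces the weighted Poincaré constant to equal $1/4$ and makes the infimum equal $-\lambda_-$ even where it is not achieved; everything else is routine once the $\beta$-decoupling is in place.
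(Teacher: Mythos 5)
Your proof is correct, and its backbone coincides with the paper's own: the pointwise identity $p+\beta p'=\frac{1+\beta}{2}e^{x}\mathbf{1}_{\R^-}+\frac{1-\beta}{2}e^{-x}\mathbf{1}_{\R^+}$, the decoupling of the functional into two independent half-line problems linked only through $u(0)=1$, and the reflection symmetry giving both the $\beta$-independence on $[-1,1]$ and the reduction to the single half-line infimum $A$. Where you genuinely differ is in how $A$ is evaluated, and there your route is more self-contained. The paper finds the minimizer for $\alpha>0$ by solving the Euler--Lagrange equation, \emph{cites}~\cite{BraCMP} for the lower bound $A\ge-\frac12+\frac12\sqrt{1+4\alpha}$ in the non-attained regime $-\frac14\le\alpha<0$, and proves both the matching upper bound and the divergence for $\alpha<-\frac14$ by explicit competitors glued to tails $C_\ell e^{-\sqrt{-\alpha}x}$ on which the integrand vanishes identically. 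Your completion of the square proves the lower bound in one stroke for all $\alpha\ge-\frac14$, recovers the same minimizer $e^{\lambda_- x}$ for $\alpha>0$ without solving an ODE, and your substitution $v=e^{x/2}\psi$ identifies $-\frac14$ as the sharp weighted Poincar\'e (Hardy) constant, which \emph{explains} the threshold rather than merely exhibiting it; your gluing alternative for $-\frac14<\alpha<0$ is essentially the paper's minimizing sequence, and your oscillation argument for $|\beta|>1$ mirrors the one used for Proposition~\ref{prop1} in the periodic setting (a case the paper's sketch leaves implicit). One caveat: in your duality step, the assertion that admissibility of $u$ \emph{forces} the constraint $\langle w,g\rangle=-1$ is loose, since $\int_0^\infty e^{-\lambda_- t}w\dd t$ need not converge for an arbitrary $w\in L^2(e^{-x}\dd x)$ when $g\notin L^2(e^{-x}\dd x)$; but that direction is never needed --- the lower bound already follows from the completed square, and the upper bound only uses the truncated elements $w_R=-g\mathbf{1}_{[0,R]}/\|g\mathbf{1}_{[0,R]}\|^2$, which generate compactly supported admissible competitors --- so your argument stands as written.
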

The proof of this proposition, that we only sketch, relies on the identity 
$p(x)+\beta p'(x)=\frac{1+\beta}{2}e^{x}{\bf 1_{\R^-}} +\frac{1-\beta}{2}e^{-x}{\bf 1}_{\R^+}$, implying that
\[
I_\R(\alpha,\beta)=
\inf\biggl\{ \int_0^\infty e^{-x}\bigl(\alpha u^2+ u_x^2\bigr)\dd x \colon u\in H^1(\R^+), \;u(0)=1\biggr\}.
\]
For $\alpha>0$, one easily find solving the associate Euler--Lagrange equation 
the minimizer $\bar u\in H^1(\R^+)$ given by 
$\bar u(x)=\exp\bigl(x(\frac12-\frac12\sqrt{1+4\alpha})\bigr)$ and the minimum by formula~\eqref{min-bet}.

For $-\frac14\le \alpha<0$, $\bar u$ does not belong to $H^1(\R^+)$. However, the inequality 
$I_\R(\alpha,\beta)\ge-\frac12+\frac12\sqrt{1+4\alpha}$ does hold, as proved in~\cite{BraCMP}.
To see that equality~\eqref{min-bet} remains true in this case, we can construct a minimizing
sequence taking $u_\ell=\bar u$ on $[0,\ell]$ and $u_\ell(x)=C_\ell \exp(-\sqrt{-\alpha} x)$ for $x\ge\ell$,
with $C_\ell$ chosen in a such way that $u_\ell$ is continuous.
In this way,  $\int_\ell^{+\infty}e^{-x}\bigl(\alpha u^2+ u_x^2\bigr)\dd x =0$
and letting $\ell\to\infty$ we find 
$\int_0^\ell e^{-x}\bigl(\alpha u^2+ u_x^2\bigr)\dd x \to-\frac12+\frac12\sqrt{1+4\alpha}$.

For $\alpha<-1/4$, we choose $b>1/2$ such that $\alpha+b^2<0$.
Considering now continuous functions on $\R^+$
of the form $u_\ell(x)=e^{bx}$ on $[0,\ell]$ and $C_\ell \exp(-\sqrt{-\alpha} x)$ for $x\ge\ell$
and letting $\ell\to\infty$ we easily see that $I(\alpha,\beta)=-\infty$ in this case.

Notice that the condition $\alpha\ge-1/4$ gives the restriction $\gamma\le4$.
On the other hand, the formula
$\beta_\gamma=\inf\{\beta\ge0\colon \beta^2+ I_\R(\alpha,\beta)-\alpha\ge0\}$ immediately  gives
$\beta_\gamma=+\infty$ if $\gamma<1$. We recover in this way that we must restrict ourselves to 
$\gamma\in[1,4]$ and that $\beta_\gamma$ is given by formula~\eqref{begar} in the non-periodic case.

\section{Appendix: numerical analysis}
\label{sec:app}

In this appendix we briefly revisit Theorem~\ref{maintheo} using a numerical approach.
Rather than obtaining estimates for $\beta_\gamma$ by making use of some exact formulas,  as we did in Theorem~\ref{theoquant}, we can evaluate
$\beta_\gamma$ numerically. 
In order to achieve this, we need first to approximate $I(\alpha,\beta)$.
This can be done by approaching the solutions of the boundary value problem~\eqref{ELa1}, 
for any fixed $\alpha$ and $\beta$, with $0\le \beta<\frac{e+1}{e-1}$. This 
can be done with arbitrary high precision applying {\it e.g.\/} a finite difference technique with Richardson extrapolation.
In order to get an error smaller than $0.001$ we needed a large number of grid points (a few thousands), especially  when $\beta$ is getting close to the critical value $\frac{e+1}{e-1}$.

The following table provides the values of~$\beta_\gamma$ computed numerically, corresponding to the values of~$\gamma$ listed in~\cite{Dai-Huo} and associated with known hyper-elastic materials. 
Only in one case ($\gamma=-0.539$) our theorems are not applicable. 
\medskip
\begin{center}
\begin{tabular}{|l|cccccccccc|}
\hline
$\phantom{\Bigl|}\gamma\phantom{\bigl|}$ 		&-29.476 & -4.891 	& -2.571	& -1.646 	& -0.539 	&  1.010 &  1.236 &1.700  & 2.668  & 3.417 \\
\hline
$\phantom{\Bigl|}\beta_\gamma\phantom{\bigl|}$	&0.326 	& 0..492 & 0.684 	& 0.933 	& n.a.	 &  0.507 & 0.375 &.0.207 & 0.035 & 0.035\\
\hline
\end{tabular}
\end{center}
\medskip

\begin{figure}
\begin{minipage}[c]{0.55\linewidth}
\vskip-6.5cm
\includegraphics[width=12cm,height=12cm]{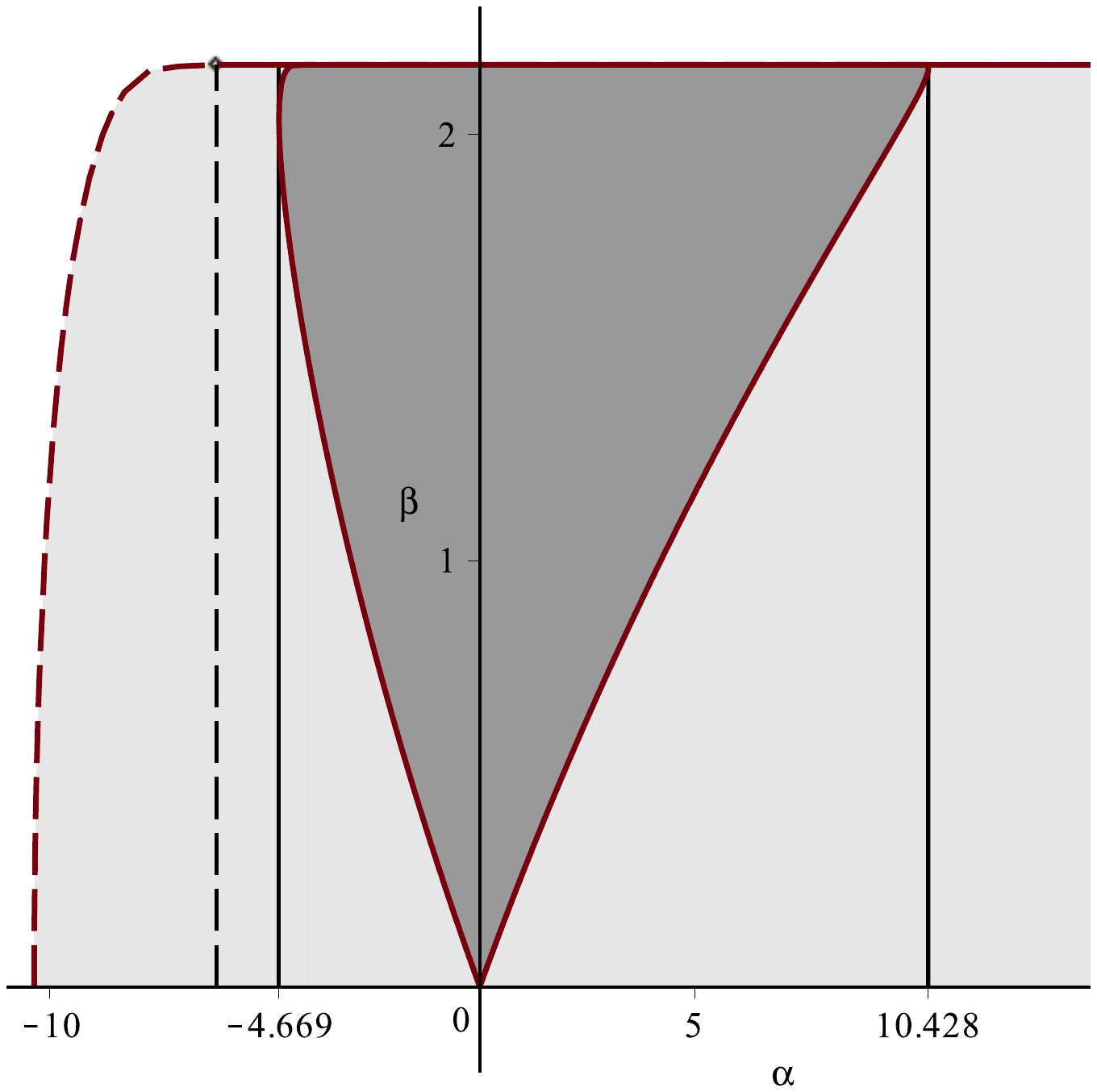}{\centering}
\vskip-5.2cm
\caption{Set of points $(\alpha,\beta)$ such that $\beta\ge0$ and 
$\beta^2+I(\alpha,\beta)-\alpha\ge0$ (dark gray region). Right: two zooms on the upper-left and upper-right corners.}
\end{minipage}
\hfill
\begin{minipage}[c]{.44\linewidth}
\qquad
\includegraphics[width=11cm,height=12cm]{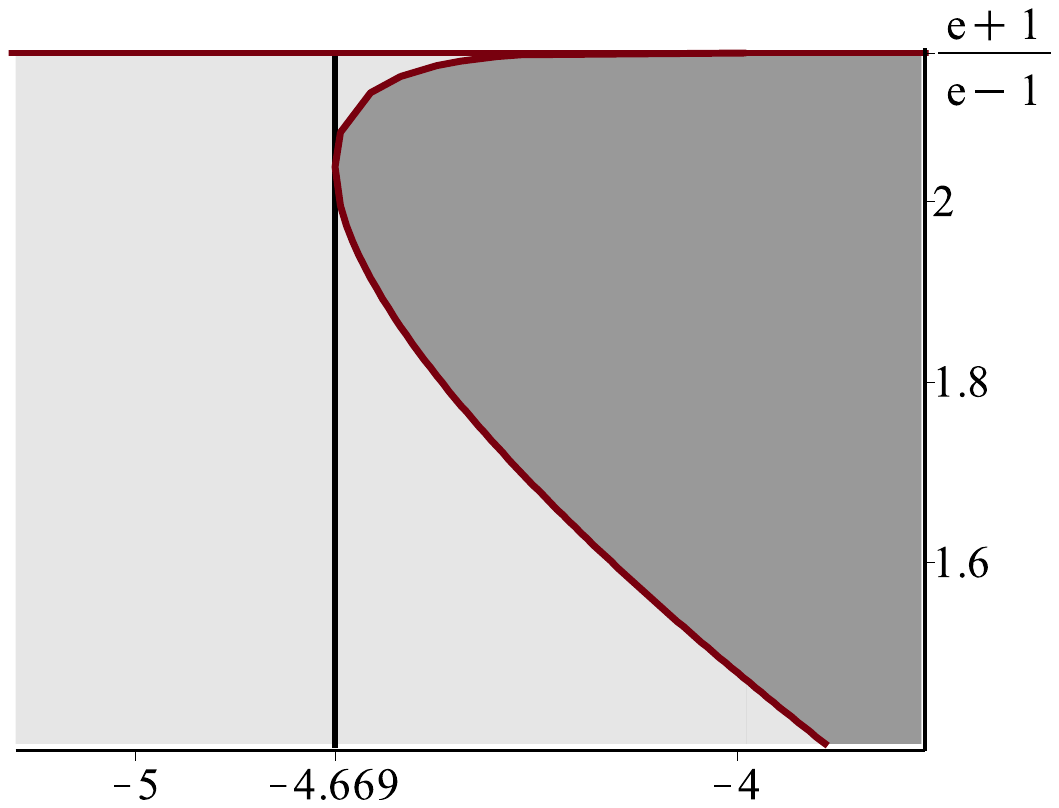}{\centering}
\vskip-7.5cm
\includegraphics[width=11cm,height=12cm]{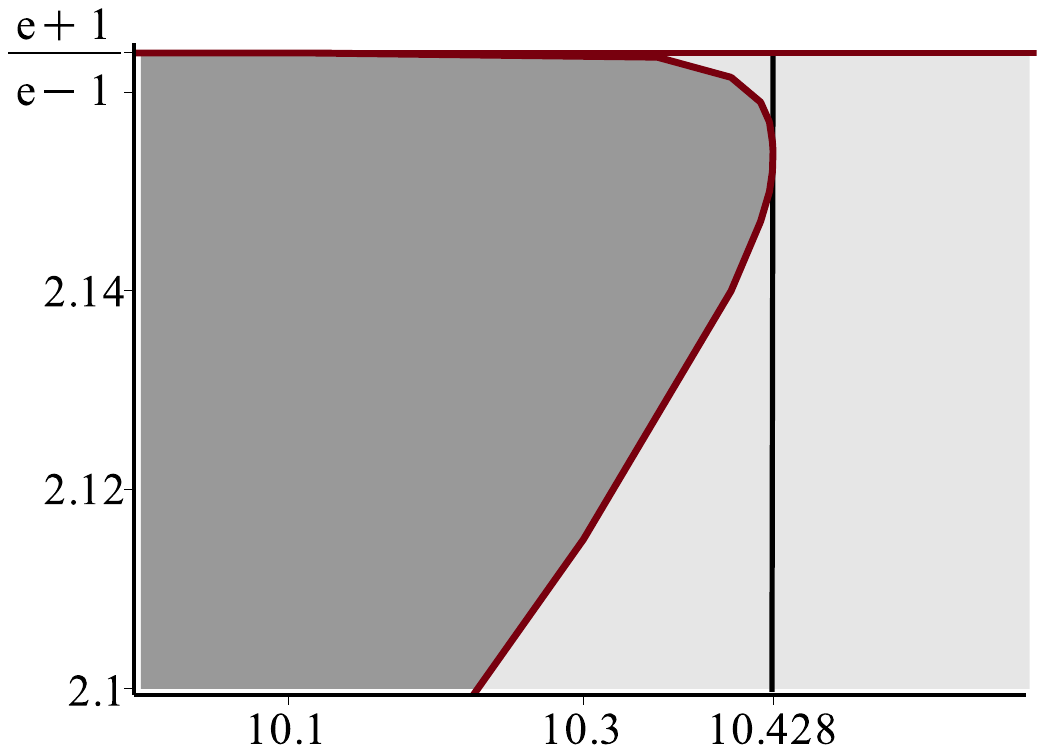}{\centering}
\end{minipage}
\vskip-7cm
\label{feuille}
\end{figure}

Picture~\ref{feuille} illustrates the set of points~$(\alpha,\beta)$ such that 
$\beta\ge0$ and $\beta^2+I(\alpha,\beta)-\alpha\ge0$. 
It turns out that this set is nonempty (approximatively) for
$\alpha\in [-4.669\,, 10.428]$. 
Recalling the definition of~$\beta_\gamma$ given in~\eqref{def2:beta}-\eqref{gal}, we see that
Theorem~\ref{maintheo} can be applied for 
$\gamma\in (-\infty,-0.817\ldots]\cup[0.262\ldots,+\infty)$,
which is a range 
slightly larger than the range  $\gamma\in(-\infty,\gamma_1^-]\cup[\gamma_1^+,\infty)$, predicted by
Theorem~\ref{theoquant}.
Relying in this numerical analysis, we claim that Corollary~\ref{cnge} remains true in such a larger range.

\begin{figure}
\includegraphics[width=15cm,height=9cm]{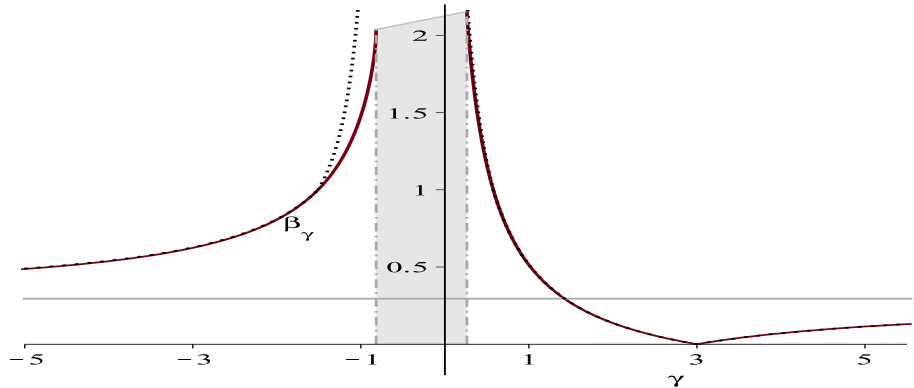}{\centering}
\vskip-3.5cm
\caption{The plot of $\beta_\gamma$ in the interval 
$\gamma\in (-\infty,-0.817]\cup[0.262,+\infty)$ (continuous line) and the upper estimate of $\beta_\gamma$ provided by Theorem~\ref{theoquant} (dotted line).}
\label{courbe-beta-gamma}
\end{figure}

Figure~\ref{courbe-beta-gamma} provides the plot of the the function $\gamma\mapsto\beta_\gamma$ as computed numerically. The curve is very close to that in Figure~\ref{upbound-betag} (reproduced here through a dotted line) and the two curves almost overlap,
in particular for $\gamma>\gamma_1^+$ or $\gamma<\gamma_2^+$ (only when 
$\gamma\in [\gamma_2^-,\gamma_1^-]$ the difference between the two curves becomes visible).

%

%


\section{Acknowledgements}
The authors are grateful to the referee for his useful suggestions and for pointing out a few relevant geometrical interpretations of the model studied in the present paper.

\end{document}